\def \C {\mathbb C}
\def \R {\mathbb R}
\def \Z {\mathbb Z}
\def\cB{\mathcal{B}}
\def\cC{\mathcal{C}}
\def\cF{\mathcal{F}}
\def\cH{\mathcal{H}}
\def\cK{\mathcal{K}}
\def\cL{\mathcal{L}}
\def\cS{\mathcal{S}}
\def\cU{\mathcal{U}}
\def\vareps{\varepsilon}
\def\vp{\mathbf{p}}
\def\vx{\mathbf{x}}
\def\vy{\mathbf{y}}
\newcommand{\expect}[1]{\ensuremath{\mathbf{E}\left(#1\right)}}
\newcommand{\cov}[2]{\ensuremath{\mathbf{Cov}\big(#1,#2\big)}}
\newcommand{\condprob}[2]{\ensuremath{\mathbf{P}\big(#1\bigm|#2\big)}}
\newcommand{\ind}[1]{\ensuremath{{1\!\!1}_{\{#1\}}}}
\def\ordo{o}
\def\one{1\!\!1}
\DeclareMathOperator*{\Perm}{Perm}
\renewcommand{\d}{\mathrm d}
\newcommand{\abs}[1]{\left|\,{#1}\,\right|}
\newcommand{\norm}[1]{\left\|\,{#1}\,\right\|}
\def \wt {\widetilde}
\def\ol{\overline}
\def\wh{\widehat}
\newtheorem {theorem}{Theorem}
\newtheorem {lemma}{Lemma}
\newtheorem {corollary}{Corollary}
\newtheorem {proposition}{Proposition}
\newtheorem* {theorem*}{Theorem}
\newtheorem* {thm*}{Theorem}
\newtheorem* {lemma*}{Lemma}
\newtheorem* {lem*}{Lemma}
\newtheorem* {corollary*}{Corollary}
\newtheorem* {cor*}{Corollary}
\newtheorem* {proposition*}{Proposition}
\newtheorem* {prop*}{Proposition}
\newtheorem* {definition*}{Definition}
\newtheorem* {def*}{Definition}
\newtheorem* {conjecture*}{Conjecture}
\newtheorem* {remark*}{Remark}
\newtheorem* {rem*}{Remark}
\def\be{\begin{equation}}
\def\ee{\end{equation}}
\def\bea{\begin{eqnarray}}
\def\eea{\end{eqnarray}}
\newcommand{\wick}[1]{\ensuremath{:\!\! #1 \!\!:\,}}
\title{Diffusive limit for the myopic (or ``true'') \\
self-avoiding random walk in $d\ge3$}
\author{
{\sc Ill\'es Horv\'ath} \qquad {\sc B\'alint T\'oth} \qquad {\sc B\'alint Vet\H o}
\\
Institute of Mathematics, Budapest University of Technology
\\
Egry J\'ozsef u.\ 1, Budapest, H-1111, Hungary
\\
email: {\tt \{pollux,balint,vetob\}@math.bme.hu}
}
\begin{document}

\maketitle

\begin{abstract}
The \emph{myopic (or `true') self-avoiding walk} model (MSAW) was introduced in the physics literature by Amit, Parisi and Peliti in \cite{amit_parisi_peliti_83}. It is a random motion in $\Z^d$ pushed towards domains less visited in the past by a kind of negative gradient of the occupation time measure.

We investigate the asymptotic behaviour of MSAW in the non-recurrent dimensions. For a wide class of self-interaction functions, we identify a natural stationary (in time) and ergodic distribution of the environment (the local time profile) as seen from the moving particle and we establish diffusive lower and upper bounds for the displacement of the random walk. For a particular, more  restricted class of interactions, we prove full CLT for the finite dimensional distributions of the displacement. This result settles part of the conjectures (based on non-rigorous renormalization group arguments) in \cite{amit_parisi_peliti_83}. The proof of the CLT follows the non-reversible version of Kipnis-Varadhan-theory. On the way to the proof we slightly weaken the so-called \emph{graded sector condition} (that is: we slightly enhance the corresponding statement). 

\medskip\noindent
{\sc MSC2010:} 60K37, 60K40, 60F05, 60J55

\medskip\noindent
{\sc Key words and phrases:}
self-repelling random motion, local time, central limit theorem
\end{abstract}

\section{Introduction and background}
\label{s:intro}

\subsection{Background}
\label{ss:background}

Let $w:\R\to (0,\infty)$ be a fixed smooth ``rate function'' for which
\begin{align}
\label{ellipticity}
\inf_{u\in\R}w(u):=\gamma>0,
\end{align}
and denote by $s$ and $r$ its even, respectively, odd part:
\begin{align}
\label{evenodd}
s(u):=\frac{w(u)+w(-u)}2 -\gamma,
\qquad
r(u):=\frac{w(u)-w(-u)}2.
\end{align}
Beside \eqref{ellipticity}, we make the following assumptions: there exist constants $c>0$, $\vareps>0$ and $C<\infty$ such that
\begin{align}
\label{convexity}
&
\inf_{u\in\R}r^{\prime}(u)> c,
\\
\label{s_small}
&
s(u)< C\exp\{(c-\vareps) u^2/2\},
\end{align}
and, finally, we make the technical assumption that $r(\cdot)$ is an entire function which satisfies:
\begin{align}
\label{r_entire} 
\sum_{n=0}^\infty \left(\frac2{c}\right)^{n/2} \abs{r^{(n)}(0)}<\infty.
\end{align} Condition \eqref{ellipticity} is \emph{ellipticity} which ensures that the jump rates of the random walk considered are \emph{minorated} by an ordinary simple symmetric walk. Condition \eqref{convexity} ensures sufficient self-repellence of the trajectories and sufficient log-convexity of the stationary measure identified later. Conditions \eqref{s_small} and \eqref{r_entire} are of technical nature and their role will be clarified later.

Let $t\mapsto X(t)\in\Z^d$ be a continuous time nearest neighbor jump process on the integer lattice $\Z^d$ whose law is given as follows:
\begin{align}
\label{law} 
\condprob{X(t+\d t)=y} {\cF_t,X(t)=x}= \ind{\abs{x-y}=1}
w(\ell(t,x)-\ell(t,y))\,\d t +\ordo(\d t)
\end{align}
where
\begin{equation}
\ell(t,z) := \ell(0,z) + \abs{ \{0\le s\le t: X(s)=z\} } 
\qquad 
z\in\Z^d
\end{equation}
is the occupation time measure of the walk $X(t)$ with some initial values $\ell(0,z)$, $z\in\Z^d$.  This is a continuous time version of the \emph{`true' self-avoiding random walk} defined in \cite{amit_parisi_peliti_83}.

Non-rigorous (but nevertheless convincing) scaling and renormalization group arguments suggest the following dimension-dependent asymptotic scaling behaviour (see e.g.\ \cite{amit_parisi_peliti_83}, \cite{obukhov_peliti_83},
\cite{peliti_pietronero_87}):

\begin{enumerate}[--]

\item
In $d=1$: $X(t)\sim t^{2/3}$ with intricate, non-Gausssian scaling limit.

\item
In $d=2$: $X(t)\sim t^{1/2}(\log t)^{\zeta}$ and Gaussian (that is Wiener) scaling limit expected. (We note, that actually there is some controversy about the value of the exponent $\zeta$ in the logarithmic correction.)

\item
In $d\ge3$: $X(t)\sim t^{1/2}$ with Gaussian (i.e.\ Wiener) scaling limit
expected.
\end{enumerate}

In $d=1$, for some particular cases of the model (discrete time MSAW with edge, rather than site repulsion and continuous time MSAW with site repulsion, as defined above), the limit theorem for $t^{-2/3}X(t)$ was established in \cite{toth_95}, respectively, \cite{toth_veto_09} with the truly intricate
limiting distribution identified. The scaling limit of the \emph{process} $t\mapsto N^{-2/3}X(Nt)$ was constructed and analyzed in \cite{toth_werner_98}.

In $d=2$, very little is proved rigorously. For the isotropic model exposed above we expect the value $\zeta=1/4$ in the logarithmic correction. For a modified, anisotropic  version of the model, where self-repulsion acts only in one spatial (say, the horizontal) direction, the exponent $\zeta=1/3$ is expected and the lower bound $\varliminf_{t\to\infty} t^{-1}(\log
t)^{-1/2} \expect{X(t)^2}>0$ is actually proved, cf.\ \cite{valko_09}.

In the present paper, we address the $d\ge3$ case. We identify a stationary and ergodic distribution of the environment as seen from the position of the moving point and in this particular stationary regime, we prove under very general conditions diffusive (that is $t$-order) bounds on the variance of $X(t)$. Under somewhat more restrictive conditions on the rate function $w(\cdot)$ we prove full  \emph{diffusive limit}
(that is non-degenerate CLT with normal scaling) for the displacement.

\subsection{Formal setup and results}
\label{ss:setup_and_results}

It is natural to consider the local time profile as seen from the position of the random walker
\begin{align}
\label{etadef} 
\eta(t) = \big( \eta(t,x) \big)_{x\in\Z^d} 
\qquad
\eta(t,x):=\ell(t,X(t)+x).
\end{align}
It is obvious that $t\mapsto\eta(t)$ is a Markov process on the state space
\begin{align}
\label{Omega} \Omega:=\{\omega=\big(\omega(x)\big)_{x\in\Z^d}\,:\,
\omega(x)\in\R\}.
\end{align}
Note that we allow initial values $\ell(0,x)\in\R$ for the occupation time measure and thus $\ell(t,x)$ need not be non-negative. The group of spatial shifts
\begin{align}
\label{shifts} 
\tau_z:\Omega\to\Omega, \qquad \tau_z\omega(x):=\omega(z+x)
\end{align}
acts naturally on $\Omega$.

The infinitesimal generator of the process $t\mapsto\eta(t)$, defined for smooth cylinder functions $f:\Omega\to\R$, is
\begin{align}
\label{infgen}
Gf(\omega)
=
\partial f(\omega)
+ \sum_{e\in\cU} w(\omega(0)-\omega(e))
\big(f(\tau_e\omega)-f(\omega)\big)
\end{align}
where
\begin{align}
\label{partial_op}
\partial f(\omega)
:=
\frac{\partial f}{\partial \omega(0)},
\end{align}
is well-defined for smooth cylinder functions.

The meaning of the various terms on the right-hand side of \eqref{infgen} is clear: the first term on the right-hand side is due to the deterministic linear growth of local time at the site actually occupied by the random walker, the other terms (in the sum) are due to the random shifts of the environment caused by the jumps of the random walker.

Next, we define a probability measure on $\Omega$ which will turn out to be stationary and ergodic for the Markov process $t\mapsto\eta(t)$. Let
\begin{align}
\label{R} 
R:\R\to[0,\infty), \qquad R(u):=\int_0^ur(v)\,\d v.
\end{align}
$R$ is strictly convex and even. We denote by $\d\pi(\omega)$ \emph{the unique centered Gibbs measure} on $\Omega$ defined by the conditional specifications for $\Lambda\subset \Z^d$ finite:
\begin{align}
\label{specifications} 
\d\pi(\omega_\Lambda\,|\, \omega_{Z^d\setminus\Lambda})
=
Z_{\Lambda}^{-1} 
\exp\left\{- \frac12 
\sum_{ \stackrel {x,y\in\Lambda} {\abs{x-y}=1} } R(\omega(x)-\omega(y)) - 
\sum_{ \stackrel {x\in\Lambda, y\in\Lambda^c} {\abs{x-y}=1} }
R(\omega(x)-\omega(y))
\right\}\,\d\omega_\Lambda.
\end{align}
Note that the (translation invariant) Gibbs measure given by the specifications \eqref{specifications} exists only in three and more dimensions. For information about gradient measures of this type, see \cite{funaki_05}. The measure $\d\pi$ is invariant under the spatial shifts and the dynamical system
$(\Omega, \pi, \tau_z: z\in\Z^d)$ is \emph{ergodic}.

In the particular case when $r(u)=u$, $R(u)=u^2/2$, the measure $\d \pi(\omega)$ is the distribution of the massless free Gaussian field on $\Z^d$, $d\ge3$,  with expectations and covariances
\begin{align}
\label{mfgfcovar}
\int_{\Omega} \omega(x)\,\d\pi(\omega)=0, 
\qquad
\int_{\Omega} \omega(x)\omega(y)\,\d\pi(\omega)=(-\Delta^{-1})_{x,y}=:b(y-x),
\end{align}
where $\Delta$ is the lattice-Laplacian.

We are ready now to formulate the results of the present paper.

\begin{proposition}
\label{prop:statinarity+ergodicity}
The probability measure $\d\pi(\omega)$ is stationary and ergodic for the Markov process $t\mapsto\eta(t)\in\Omega$.
\end{proposition}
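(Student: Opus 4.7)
The plan is to verify $\int Gf\,d\pi=0$ on smooth cylinder functions (stationarity), and then lift spatial ergodicity of $\pi$ to time-ergodicity of the semigroup by a standard environment-process argument.

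For stationarity, split $G=G_{\mathrm{dr}}+G_{\mathrm{jp}}$, with $G_{\mathrm{dr}}f=\partial f$ the drift and $G_{\mathrm{jp}}f(\omega)=\sum_{e\in\cU}w(\omega(0)-\omega(e))(f(\tau_e\omega)-f(\omega))$ the jump part. For the drift piece, the conditional density of $\omega(0)$ given $\omega_{\neq 0}$ is, by \eqref{specifications}, proportional to $\exp\{-\sum_{e\in\cU}R(\omega(0)-\omega(e))\}$, since only the $2d$ bonds through the origin depend on $\omega(0)$. The super-quadratic growth of $R$ forced by \eqref{convexity} kills boundary terms, and a one-dimensional integration by parts gives, using $R'=r$,
\[
\int\partial f\,d\pi \;=\; \int f(\omega)\sum_{e\in\cU}r(\omega(0)-\omega(e))\,d\pi(\omega).
\]
For the jump piece, translation invariance of $\pi$ under $\tau_{-e}$ yields
\[
\int w(\omega(0)-\omega(e))\,f(\tau_e\omega)\,d\pi \;=\; \int w(\omega(-e)-\omega(0))\,f(\omega)\,d\pi,
\]
and summing over the symmetric set $\cU$, re-indexing $e\to-e$, and invoking $w(-u)-w(u)=-2r(u)$ expresses $\int G_{\mathrm{jp}}f\,d\pi$ as a multiple of $\int f\sum_{e}r(\omega(0)-\omega(e))\,d\pi$. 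The calibration $R'=r$ between the Gibbs potential and the antisymmetric part of the jump rates is precisely what is needed to make the drift and jump contributions cancel, i.e.\ $G^*\mathbf 1=0$. This is the standard D\"urr--Goldstein--Lebowitz-type identification of the stationary environment measure.

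For ergodicity, I would invoke the known ergodicity of the spatial dynamical system $(\Omega,\pi,\{\tau_z\}_{z\in\Z^d})$, which is the usual extremality of the gradient Gibbs measure with strictly convex potential $R$ in $d\ge 3$ (see \cite{funaki_05}). Suppose $F\in L^2(\Omega,\pi)$ satisfies $P_tF=F$ for all $t\ge 0$. The ellipticity \eqref{ellipticity} bounds $w$ uniformly away from zero, so in any time window the process jumps by a prescribed $e\in\cU$ with uniformly positive probability; together with $P_t$-invariance of $F$ and $\pi$-stationarity this forces $F\circ\tau_e=F$ modulo $\pi$-null sets, for every $e\in\cU$. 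Spatial ergodicity of $\pi$ then forces $F$ to be constant $\pi$-a.s., which is exactly time-ergodicity.

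The delicate step is the stationarity book-keeping: the $1/2$-factor in \eqref{specifications}, the pairing of $e$ with $-e$ in $\cU$, and the factor $2$ in $w(u)-w(-u)=2r(u)$ must conspire exactly via the identity $R'=r$. The ergodicity half is essentially soft once stationarity and the spatial extremality of $\pi$ are in hand.
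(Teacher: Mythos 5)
Your stationarity argument is, in spirit, the same computation the paper performs (the paper packages it as the integration-by-parts identity \eqref{partial_adjoint} plus the commutation relations and reads off $G^*\one=0$), but the step you yourself single out as delicate is never actually carried out, and as you have set it up it does not close. Finish your own bookkeeping: under \eqref{specifications} the conditional law of $\omega(0)$ given $\omega_{\neq 0}$ has density proportional to $\exp\{-\sum_{e\in\cU}R(\omega(0)-\omega(e))\}$ (each bond through the origin carries the potential exactly once), so your one-dimensional integration by parts gives $\int\partial f\,\d\pi=\int f\,\sum_{e\in\cU}r(\omega(0)-\omega(e))\,\d\pi$ with coefficient $1$; on the other hand the jump part, after the shift and the re-indexing $e\to-e$, equals $\int f\,\sum_{e\in\cU}\big(w(\omega(e)-\omega(0))-w(\omega(0)-\omega(e))\big)\,\d\pi=-2\int f\,\sum_{e\in\cU}r(\omega(0)-\omega(e))\,\d\pi$. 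So with $R'=r$ and \eqref{specifications} read literally, the drift and jump contributions differ by a factor of $2$ and do \emph{not} cancel; cancellation requires the single-site conditional Hamiltonian $2\sum_{e\in\cU}R(\omega(0)-\omega(e))$ (equivalently, bond potential $2R$), which is exactly the normalization implicit in the paper's key identity $\partial+\partial^*=2N$ of \eqref{partial_adjoint} (with the specification as printed one only gets $\partial+\partial^*=N$). Either you double the potential, after which your cancellation is exact, or you must reconcile this normalization explicitly; asserting that ``the calibration $R'=r$ is precisely what is needed'' is, as it stands, the gap, and it sits at the only place where the proof can fail by a constant.

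Your ergodicity argument is genuinely different from the paper's and also has a hole: a jump of the walk never produces the pure shift $\tau_e\omega$, because local time accrues at the origin before the jump (and at the new origin afterwards), so the configurations reachable from $\omega$ via a single $e$-jump are of the form $\tau_e(\omega+s\delta_0)$ with $s>0$ the random jump time and $\delta_0$ the unit mass at the origin. Hence the (correct) fact that a $P_t$-invariant $F$ is a.s.\ constant along trajectories yields $F(\tau_e(\omega+s\delta_0))=F(\omega)$ for a.e.\ $s$, not $F(\tau_e\omega)=F(\omega)$; removing the extra $s\delta_0$ requires an additional step (e.g.\ that perturbing the single coordinate $\omega(0)$ maps $\pi$ into an equivalent measure, followed by an averaging in $s$), which you do not indicate. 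The paper's route avoids this entirely, and you already have all its ingredients: if $Gf=0$ then $0=(f,Sf)\ge\gamma(f,-\Delta f)=\tfrac{\gamma}{2}\sum_{e\in\cU}\norm{\nabla_e f}^2$ as in \eqref{dirichlet} (positivity of $S_1$ only uses $s\ge0$, which follows from \eqref{ellipticity}), hence $\nabla_e f=0$ for all $e\in\cU$, and spatial ergodicity of $(\Omega,\pi,\tau_z)$ gives $f=\mathrm{const}$. Replace your probabilistic step by this Dirichlet-form argument, or supply the missing absolute-continuity and averaging details.
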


The law of large numbers for the displacement of the random walker drops out for free:

\begin{corollary}
\label{cor:lln}
For $\pi$-almost all initial profile $\ell(0,\cdot)$, almost surely
\begin{align}
\label{lln}
\lim_{t\to\infty}\frac{X(t)}{t}=0.
\end{align}
\end{corollary}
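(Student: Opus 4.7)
The approach is to write the position $X(t)$ as a martingale plus an integral of the mean drift along the environment process, and then combine Birkhoff's ergodic theorem for $\eta(\cdot)$ (guaranteed by Proposition \ref{prop:statinarity+ergodicity}) with a martingale law of large numbers. Setting
\begin{equation*}
V(\omega) \,:=\, \sum_{e\in\cU} e\, w(\omega(0)-\omega(e)) \,\in\, \R^d,
\end{equation*}
Dynkin's formula applied to the jump dynamics \eqref{law} gives that
\begin{equation*}
M(t) \,:=\, X(t) - \int_0^t V(\eta(s))\,\d s
\end{equation*}
is an $\R^d$-valued $\cF_t$-martingale.

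The nontrivial input is that $V\in L^1(\pi)$ with $\int V\,\d\pi = 0$. Integrability reduces to $\int w(\omega(0)-\omega(e))\,\d\pi(\omega)<\infty$: the strict convexity assumption \eqref{convexity} (i.e.\ $R''\ge c$) together with standard gradient-Gibbs tail estimates (cf.\ \cite{funaki_05}) yields a Gaussian upper bound on the law of $\omega(0)-\omega(e)$ under $\pi$ with exponent at most $c/2$, and the technical hypotheses \eqref{s_small} and \eqref{r_entire} are tailored precisely so that $w(\cdot)$ is integrable against such a tail. Vanishing of $\int V\,\d\pi$ is a symmetry argument: since $R$ is even and the sum in \eqref{specifications} runs over unoriented edges, the measure $\pi$ is invariant under the reflection $\sigma\omega(x):=\omega(-x)$; on the other hand, the change of summation index $e\mapsto -e$ in the definition of $V$ gives $V\circ\sigma=-V$. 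Hence $\int V\,\d\pi = \int V\circ\sigma\,\d\pi = -\int V\,\d\pi$, forcing $\int V\,\d\pi=0$.

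Given these facts, Birkhoff's theorem applied via Proposition \ref{prop:statinarity+ergodicity} yields $\frac1t\int_0^t V(\eta(s))\,\d s \to 0$ almost surely for $\pi$-a.e.\ initial profile. For the martingale part, each component $M_i$ has predictable quadratic variation
\begin{equation*}
\langle M_i\rangle_t \,=\, \int_0^t \sum_{e\in\cU} e_i^2\, w(\eta(s,0)-\eta(s,e))\,\d s,
\end{equation*}
whose expectation in the stationary regime grows linearly in $t$ by the same integrability bound, and whose $1/t$-normalized version converges a.s.\ to a finite constant, again by the ergodic theorem. The classical strong law for square-integrable martingales with linearly growing angle bracket then gives $M(t)/t\to 0$ a.s., and adding the two contributions proves $X(t)/t\to0$ almost surely. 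The only delicate step is the $L^1(\pi)$-bound on $V$ — this is where hypotheses \eqref{s_small} and \eqref{r_entire} earn their keep; everything else is routine.
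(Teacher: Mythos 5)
Your proposal is correct and follows essentially the route the paper intends: the corollary is derived from Proposition \ref{prop:statinarity+ergodicity} via the same martingale-plus-compensator decomposition that the paper writes out in \eqref{martingale+compensator}, with the compensator having zero $\pi$-mean by symmetry and being $\pi$-integrable thanks to the Brascamp--Lieb-type tail bound \eqref{Z-finite} (your ``gradient-Gibbs tail estimate''), so that the ergodic theorem and the martingale strong law give \eqref{lln}. The only difference is cosmetic: you cite external tail estimates where the paper proves the needed bound itself in Section \ref{s:diffusive_bounds}.
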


However, the main results refer to the diffusive scaling limit of the displacement.

\begin{theorem}
\label{thm:diffusive_limit}
\newcounter{szaml}
\begin{list}
{(\arabic{szaml})}{\usecounter{szaml}\setlength{\leftmargin}{1em}}

\item
If conditions \eqref{ellipticity}, \eqref{convexity}, \eqref{s_small} and \eqref{r_entire} hold for the rate function, then
\begin{align}
\label{diffusive_bounds}
0<\gamma
\le
\inf_{{\abs{e}=1}} \varliminf_{t\to\infty} t^{-1} \expect{(e\cdot X(t))^2}
\le
\sup_{{\abs{e}=1}} \varlimsup_{t\to\infty} t^{-1} \expect{(e\cdot X(t))^2}
<\infty.
\end{align}

\item
Assume that
\begin{align}
\label{polynomial}
r(u)=u,\qquad s(u)=s_4u^4+s_2u^2+s_0,
\end{align}
and we also make the technical assumption that ${s_4}/{\gamma}$ be sufficiently small. Then the matrix of asymptotic covariances
\begin{align}
\label{asymptotic_covariances}
\sigma^2_{kl}:=\lim_{t\to\infty}t^{-1}\expect{X_k(t)X_l(t)}
\end{align}
exists and it is non-degenerate. The finite dimensional distributions of the rescaled displacement process
\begin{align}
\label{rescaled_displacement} 
X_N(t) := N^{-1/2}X(Nt)
\end{align}
converge to those of a $d$-dimensional Brownian motion with covariance matrix $\sigma^2$.

\end{list}
\end{theorem}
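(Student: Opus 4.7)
My plan is to attack both parts via the martingale approximation / Kipnis--Varadhan framework signalled in the abstract. The natural starting point is the Doob decomposition
\[
X(t) = M(t) + \int_0^t \varphi(\eta(s))\,\d s,
\qquad
\varphi(\omega):=\sum_{e:\abs{e}=1}e\,w(\omega(0)-\omega(e)),
\]
where $M$ is a $d$-dimensional $L^2$-martingale under the stationary process $\eta$ started from $\pi$. Invariance of $\pi$ (Proposition~\ref{prop:statinarity+ergodicity}) yields $\int\varphi\,\d\pi=0$, and grouping the jumps $\pm e$ isolates the contributions of the odd part $r$ and of the Hermite-chaos pieces generated by the even perturbation $s$ from \eqref{evenodd}.

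For part (1) I would split the generator on $L^2(\pi)$ as $G=S+A$, with $S$ symmetric and $A$ antisymmetric. The predictable quadratic variation of $e'\cdot M$ is directly controlled by the ellipticity condition \eqref{ellipticity}, yielding $c_1 t\le\expect{(e'\cdot M(t))^2}\le c_2 t$. For the additive functional $I(t):=\int_0^t\varphi(\eta(s))\,\d s$ I would invoke the non-reversible Kipnis--Varadhan variance inequality, giving $\expect{(e'\cdot I(t))^2}\le Ct\,\norm{e'\cdot\varphi}_{-1}^2$, the $H_{-1}$-norm being the one defined through the symmetric Dirichlet form of $S$. Membership $\varphi\in H_{-1}$ would then be extracted from a Brascamp--Lieb / spectral-gap estimate for the log-concave Gibbs measure $\pi$: log-concavity is provided by \eqref{convexity}, while the sub-Gaussian growth condition \eqref{s_small} ensures $L^2(\pi)$-integrability of $\varphi$ and of the test functions used in the comparison. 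This gives the upper bound in \eqref{diffusive_bounds}; the lower bound $\gamma\le\varliminf t^{-1}\expect{(e'\cdot X(t))^2}$ comes from the variational (Rayleigh--Ritz-type) representation of the asymptotic variance adapted to the non-reversible setting, in which the ellipticity $w\ge\gamma$ immediately forces $\sigma^2_{e'e'}\ge\gamma$.

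For part (2), the specialisation \eqref{polynomial} with $r(u)=u$ places us in the Gaussian setting of \eqref{mfgfcovar}: $\pi$ is the lattice massless free field. I would fix the Wiener--It\^{o} (Hermite) chaos decomposition $L^2(\pi)=\bigoplus_{n\ge0}\cH_n$ and write $A=A^{+}+A^{-}$, with $A^{+}$ raising and $A^{-}$ lowering the chaos degree while $S$ preserves it. Since $r$ is linear and $s$ is a degree-four polynomial, the drift $\varphi$ lies in $\cH_1\oplus\cH_2\oplus\cH_4$, and the off-diagonal blocks of $G$ connect chaos layers at most four levels apart. The CLT then reduces to a uniform-in-$\lambda$ control of the resolvent $(\lambda-G)^{-1}\varphi$ in the $H_{-1}$-norm, which I would obtain from a slightly enhanced form of the \emph{graded sector condition}: operator bounds of order $Cn^{\alpha}$ on the off-diagonal chaos blocks $A^{\pm}|_{\cH_n}$ (measured relative to the symmetric Dirichlet form), with $\alpha$ small enough for a Neumann-type expansion along the chain of chaoses to converge. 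The smallness hypothesis on $s_4/\gamma$ is precisely what makes the corresponding geometric series summable. Once these inputs are in place, the non-reversible Kipnis--Varadhan CLT yields convergence of the finite-dimensional distributions of $N^{-1/2}X(Nt)$ to $\sigma B(t)$.

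The main obstacle I foresee is the \emph{improved} graded sector condition itself. The classical version requires $A$ to be strictly subordinate to $S$ on each $\cH_n$ in a rigid sense that this model does not quite satisfy: the unboundedness of $\partial f/\partial\omega(0)$ in \eqref{infgen} (the genuine source of non-reversibility) couples adjacent chaoses too strongly for the standard bound, while the symmetric Dirichlet form coming from the jump rate is only of order $\gamma$. Balancing these, and simultaneously absorbing the extra off-diagonal contributions produced by the quartic perturbation $s_4 u^4$, is the delicate step, and it is exactly where the smallness of $s_4/\gamma$ must enter to close a geometric-series / fixed-point argument on the chain of chaoses.
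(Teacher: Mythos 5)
Your plan follows the same two-track architecture as the paper: martingale-plus-compensator decomposition with a variational $H_{-1}$ upper bound and Brascamp--Lieb input for part (1), and the Gaussian Fock-space grading combined with an enhanced graded sector condition and the non-reversible Kipnis--Varadhan theorem for part (2). Two specific steps, however, would not survive as you state them.

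First, the diffusive lower bound. With your single decomposition $X(t)=M(t)+\int_0^t\varphi(\eta(s))\,\d s$, ellipticity controls the quadratic variation of $M$, but that says nothing about $X$: the compensator is correlated with $M$ and could in principle cancel it, which is precisely why diffusive lower bounds are non-trivial in such models; and in the non-reversible setting there is no Rayleigh--Ritz-type two-sided variational representation of the asymptotic variance to invoke (in part (1) the limit is not even known to exist, only the $\varliminf$ and $\varlimsup$). The paper's argument splits the jump mechanism itself: in \eqref{martingale+compensator} the martingale $N(t)$ generated by the constant rate $\gamma$ alone is uncorrelated with all the remaining terms, whence $\expect{(e\cdot X(t))^2}\ge\expect{(e\cdot N(t))^2}\ge\gamma t$. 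Some orthogonality of this kind must be built into the decomposition; your version lacks it. Relatedly, on the upper-bound side the odd ($r$-)part of the compensator is not a gradient, and its $H_{-1}$ bound is not a one-line Brascamp--Lieb consequence: it is the infrared bound $\sup_p\abs{\wh C(p)}<\infty$, proved by an inductive covariance/moment estimate (Lemma \ref{lemma:covbounds}) in which the entire-function hypothesis \eqref{r_entire} and $d\ge3$ enter essentially.

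Second, your diagnosis of where the classical graded sector condition fails is inverted. In the Fock representation the skew part is $A=\sum_{e}\big(\nabla_{-e}a_e-a^*_e\nabla_{-e}\big)$, and relative to $D=\gamma\abs{\Delta}$ its off-diagonal blocks are of order $n^{1/2}$ on $\cH_n$ (the explicit Fourier computation uses $d\ge3$), comfortably inside the classical bound of order $n$; the $\partial$-term is not the obstruction. What violates the classical condition is the \emph{symmetric} off-diagonal part $S_1$, which for quartic $s$ grows like $n^2$ on $\cH_n$; the paper's enhancement of the graded sector condition (allowing growth of order $n^2$ in \eqref{evenbound} while keeping order $n$ in \eqref{oddbound}) is exactly what accommodates this, and the smallness of $s_4/\gamma$ is needed there to make the constant in front of $n^2$ admissible --- not to sum a Neumann-type series over the skew couplings.
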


\noindent{\bf Remark:}
We do not strive to obtain optimal constants in our conditions. The upper bound imposed on the ratio $s_4/\gamma$, which emerges from the computations in the proof in Section \ref{s:clt} is rather restrictive but far from optimal.

\section{Diffusive bounds}
\label{s:diffusive_bounds}

\subsection{Basic operators and the infinitesimal generator}
\label{ss:basic_operators}

We put ourselves in the Hilbert space $\cH:=\cL^2(\Omega, \pi)$ and define some linear operators. Let
\begin{align}
\cU:=\{e\in\Z^d: \abs{e}=1\}.
\end{align}
Throughout the paper, we will denote by $e$ the 2d unit vectors from $\cU$ and by $e_l$, $l=1,\dots,d$, the unit vectors pointing in the positive coordinate directions.

The following shift and difference operators will be used throughout the paper:
\begin{align}
\label{diffops}
T_e f(\omega):=f(\tau_e\omega),
\qquad
\nabla_e:=T_{e}-I,
\qquad
\Delta:=\sum_{e\in\cU}\nabla_e=-\frac12\sum_{e\in\cU}\nabla_{e}\nabla_{-e}.
\end{align}
Their adjoints are
\begin{align}
\label{diffopsadj}
T_e^*=T_{-e},
\qquad
\nabla_e^*=\nabla_{-e},
\qquad
\Delta^*=\Delta.
\end{align}
Occasionally we shall also use the notation $\nabla_l:=\nabla_{e_l}$.

We also define the multiplication operators
\begin{align}
\label{multops}
M_e f(\omega):=s(\omega(0)-\omega(e)) f(\omega),
\qquad
N_e f(\omega):=r(\omega(0)-\omega(e)) f(\omega),
\qquad
N:=\sum_{e\in\cU} N_e.
\end{align}
These are unbounded self-adjoint operators. The following commutation relations are straightforward:
\begin{align}
\label{cr1}
M_eT_e-T_eM_{-e}
=0=
N_eT_e+T_eN_{-e}.
\end{align}
The (unbounded) differential operator $\partial$ is defined in \eqref{partial_op} on the dense subspace of smooth cylinder functions and it is extended by graph closure. Integration by parts on $(\Omega,\pi)$ yields
\begin{align}
\label{partial_adjoint}
\partial+\partial^*=2N.
\end{align}

Next, we express the infinitesimal generator of the semigroup of the Markov process $t\mapsto\eta(t)$, acting on $\cL^2(\Omega,\pi)$.  Denote
\begin{align}
S:=-\frac12(G+G^*), \qquad A:=\frac12(G-G^*)
\end{align}
the self-adjoint, respectively, skew self-adjoint parts of the infinitesimal generator. Using \eqref{diffopsadj}, \eqref{multops} and \eqref{partial_adjoint}, we readily obtain
\begin{align}
\label{symm_gen}
S
&=
-\gamma \Delta + S_1
\\
\label{symm_gen_1}
S_1
&=
-
\sum_{e\in\cU} M_e\nabla_e
=
\frac12 \sum_{e\in\cU} \nabla_{-e} M_e\nabla_e,
\\
\label{skew_symm_gen}
A
&=
\phantom{-}
\sum_{e\in\cU} N_e T_{e} +\big(\partial -N\big).
\end{align}
Note that both $-\gamma \Delta$ and $S_1$ are \emph{positive operators}. Actually, $\gamma \Delta$ is the infinitesimal generator of the process of ``scenery seen by the random walker'' (in so-called RW in random scenery) and $-S_1$ is the infinitesimal generator of ``environment seen by random walker in symmetric RWRE''.

It is also worth noting that, defining the unitary involution
\begin{align}
\label{J-op}
Jf(\omega):=f(-\omega),
\end{align}
we get
\begin{align}
\label{yaglom}
JSJ=S,
\quad
JAJ=-A,
\qquad
JGJ=G^*.
\end{align}
Stationarity drops out: indeed, $G^*\one=0$. Actually, \eqref{yaglom} means slightly more than stationarity: the time-reversed and flipped process
\begin{align}
\label{rev-flip}
t\mapsto\eta^*(t):=-\eta(-t)
\end{align}
is equal in law to the process $t\mapsto\eta(t)$. This time reversal symmetry is called \emph{Yaglom reversibility} and it appears in many models with physical symmetries.

Ergodicity is also straightforward:
\begin{align}
\label{dirichlet}
(f,Sf)\ge\gamma(f,-\Delta f) = \frac12\sum_{e\in\cU}\norm{\nabla_e f}^2,
\end{align}
and hence, $Gf=0$ implies $\nabla_e f=0$, $e\in\cU$, which in turn, by ergodicity of the shifts on $(\Omega,\pi)$, implies $f=\text{const.}\one$.

Hence Proposition \ref{prop:statinarity+ergodicity}.

\subsection{Diffusive bounds}
\label{ss:diffusive_bounds}

We write the displacement $X(t)$ in the standard martingale + compensator decomposition:
\begin{align}
\label{martingale+compensator}
X(t)=N(t)+M(t)+ \int_0^t\ol\varphi(\eta(s))\,\d
s+ \int_0^t\wt\varphi(\eta(s))\,\d s.
\end{align}
Here, $N(t)$ is the martingale part due to the jump rates $\gamma$ and $M(t)$ is the martingale part due to the jump rates $w-\gamma$.

The compensators are
\begin{align}
\label{phi_bar}
&
\ol\varphi:\Omega\to\R^d,
&&
\ol\varphi_l(\omega)=
s(\omega(0)-\omega(e_l))-s(\omega(0)-\omega(-e_l)),
\\
\label{phi_tilde}
&
\wt\varphi:\Omega\to\R^d,
&& 
\wt\varphi_l(\omega)=
r(\omega(0)-\omega(e_l))-r(\omega(0)-\omega(-e_l)).
\end{align}
Note that since $s(\cdot)$ is \emph{even}, $\ol\varphi_l$, $l=1,\dots,d$, are actually \emph{gradients}:
\begin{align}
\label{phi_bar_is_grad}
\ol\varphi_l=\nabla_l\psi_l
\quad\text{ where }\quad
\psi_l:\Omega\to\R, \quad \psi_l(\omega):=s(\omega(0)-\omega(-e_l)).
\end{align}

The \emph{diffusive lower bound} follows simply from \emph{ellipticity} \eqref{ellipticity}. Indeed, it is straightforward that the martingale $N(t)$ in the decomposition \eqref{martingale+compensator} is uncorrelated with the other terms. Hence the lower bound in \eqref{diffusive_bounds}.

The main point is the \emph{diffusive upper bound} which is more subtle. Since the martingale terms in \eqref{martingale+compensator} scale diffusively, we only need to prove diffusive upper bound for the compensators. From standard variational arguments, it follows (see e.g.\ \cite{komorowski_landim_olla_09}, \cite{olla_01}, \cite{sethuraman_varadhan_yau_00}) that
\begin{align}
\label{variational_bound}
\varlimsup_{t\to\infty}t^{-1}\expect{\left(\int_0^t\varphi(\eta(s))\,\d
s\right)^2} \le 2(\varphi, S^{-1} \varphi).
\end{align}
In our particular case, from \eqref{symm_gen}, it follows that it is sufficient to prove upper bounds on $(\ol\varphi, -\Delta^{-1} \ol\varphi)$ and $(\wt\varphi, -\Delta^{-1} \wt\varphi)$. The first one drops out from \eqref{phi_bar_is_grad}:
\begin{align}
\label{bond_on_phibar}
(\ol\varphi_l, -\Delta^{-1} \ol\varphi_l) =
(\nabla_l\psi_l, -\Delta^{-1} \nabla_l\psi_l) \le
\norm{\psi_l}^2=
\expect{s(\omega(0)-\omega(e_l))^2}.
\end{align}
We need
\begin{align}
\label{bound_on_s_square}
\expect{s(\omega(0)-\omega(e_l))^2}<\infty.
\end{align}

In Lemma \ref{lemma:brascamp_lieb} below we formulate a direct consequence of Brascamp\,--\,Lieb inequality which will be used for proving \eqref{bound_on_s_square} and also diffusive bound for the second integral on the right hand side of \eqref{martingale+compensator}.

Denote
\begin{align}
\label{Z_lambda}
Z(\lambda):=
\expect{\exp\{\lambda(\omega(0)-\omega(e))^2\}}\in[1,\infty].
\end{align}

\begin{lemma}
\label{lemma:brascamp_lieb}
For any smooth cylinder function $F:\Omega\to\R$ and $\lambda\in[0,c)$:
\begin{align}
\label{brascamp-lieb}
&
Z(\lambda)
\expect{F(\omega)^2\exp\{ \lambda (\omega(0)-\omega(e))^2\}}
\le
\\
\notag
&
\hskip20mm
\frac{1}{c-\lambda}
Z(\lambda)
\expect{\sum_{x,y\in\Z^d}
\partial_xF(\omega)
(-\Delta)^{-1}_{xy}
\partial_yF(\omega)
\exp\{\lambda(\omega(0)-\omega(e))^2\}} +
\\
\notag
&
\hskip40mm
\expect{F(\omega)\exp\{ \lambda (\omega(0)-\omega(e))^2\}}^2.
\end{align}
\end{lemma}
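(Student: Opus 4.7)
The plan is to recognize \eqref{brascamp-lieb} as the classical Brascamp\,--\,Lieb variance inequality applied to the tilted Gibbs measure
\begin{align*}
\d\wt\pi(\omega):=Z(\lambda)^{-1}\exp\{\lambda(\omega(0)-\omega(e))^2\}\,\d\pi(\omega).
\end{align*}
Indeed, dividing both sides of \eqref{brascamp-lieb} by $Z(\lambda)^2$ turns the claimed bound into
\begin{align*}
\mathrm{Var}_{\wt\pi}(F)\le\frac1{c-\lambda}\,\wt\pi\Bigl(\sum_{x,y\in\Z^d}\partial_xF\,(-\Delta)^{-1}_{xy}\,\partial_yF\Bigr),
\end{align*}
which is precisely the Brascamp\,--\,Lieb estimate for $\wt\pi$ with coercivity matrix $(c-\lambda)(-\Delta)$. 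Since $\pi$ is infinite-dimensional, I would first run the whole argument in a finite box $\Lambda\subset\Z^d$ using \eqref{specifications} (with boundary condition sampled from $\pi$), where $\wt\pi_\Lambda$ is a genuinely finite-dimensional log-concave measure and the standard Brascamp\,--\,Lieb bound applies directly, and then let $\Lambda\ua\Z^d$ on cylinder test functions $F$, invoking that $(-\Delta_\Lambda)^{-1}\to (-\Delta)^{-1}$ in $d\ge 3$.

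In finite volume, $\wt\pi_\Lambda$ has density proportional to $\exp(-H_\lambda)$ with $H_\lambda(\omega)=\tfrac12\sum_{|x-y|=1}R(\omega(x)-\omega(y))-\lambda(\omega(0)-\omega(e))^2$, whose Hessian is computed directly as
\begin{align*}
\la v,\mathrm{Hess}(H_\lambda)(\omega)\,v\ra
=\sum_{|x-y|=1}r^{\prime}(\omega(x)-\omega(y))(v_x-v_y)^2-2\lambda(v_0-v_e)^2.
\end{align*}
By \eqref{convexity} this is at least $c\la v,-\Delta v\ra-2\lambda(v_0-v_e)^2$ pointwise in $\omega$, so the required coercivity $\mathrm{Hess}(H_\lambda)\ge(c-\lambda)(-\Delta)$ reduces to the deterministic operator inequality
\begin{align*}
2(v_0-v_e)^2\le\la v,-\Delta v\ra\qquad\text{for all }v,
\end{align*}
which I regard as the main technical point of the proof.

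This scalar inequality I would treat by Lagrangian duality. The sharp constant $\kappa$ in $\kappa(v_0-v_e)^2\le\la v,-\Delta v\ra$ equals
\begin{align*}
\kappa=\bigl(\la\delta_0-\delta_e,\,(-\Delta)^{-1}(\delta_0-\delta_e)\ra\bigr)^{-1}=\bigl(2(b(0)-b(e))\bigr)^{-1},
\end{align*}
with $b$ as in \eqref{mfgfcovar}. Evaluating $-\Delta b=\delta_0$ at the origin yields $2d(b(0)-b(e))=1$, hence $\kappa=d\ge 2$ in every dimension of interest, and the target $\kappa\ge 2$ is satisfied with room to spare. Combining with the pointwise Hessian lower bound gives $\mathrm{Hess}(H_\lambda)\ge(c-\lambda)(-\Delta)$, and the finite-dimensional Brascamp\,--\,Lieb inequality for $\wt\pi_\Lambda$, followed by the limit $\Lambda\ua\Z^d$, delivers \eqref{brascamp-lieb}. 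I expect the main obstacle to be the bookkeeping of the infinite-volume passage (through finite-volume Brascamp\,--\,Lieb plus convergence of the discrete Green kernels, which is standard in $d\ge 3$); the linear-algebraic kernel of the proof is the one-line identity $b(0)-b(e)=1/(2d)$.
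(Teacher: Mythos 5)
Your proposal is correct and is essentially the argument the paper has in mind: the paper omits the proof, citing the Brascamp--Lieb inequality (Proposition 2.1 of Bobkov--Ledoux) applied to the tilted measure, and your Hessian computation --- in particular the reduction of the coercivity $\mathrm{Hess}(H_\lambda)\ge(c-\lambda)(-\Delta)$ to the deterministic inequality $2(v_0-v_e)^2\le\la v,-\Delta v\ra$ via $b(0)-b(e)=1/(2d)$, i.e.\ sharp constant $d\ge2$ --- supplies exactly the convexity input needed there. The one item you file under ``bookkeeping'' that deserves an explicit word is the infinite-volume passage: besides convergence/domination of the Green kernels, the term $\mathrm{Var}\big(\E(F\mid\cF_{\Lambda^c})\big)$ in the conditional-variance decomposition must vanish as $\Lambda\ua\Z^d$, which follows from extremality (tail triviality) of the gradient Gibbs measure $\pi$; this is standard and in line with the level of detail the paper itself adopts.
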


Lemma \ref{lemma:brascamp_lieb} follows directly from Brascamp\,--\,Lieb inequality as stated in e.g.\ Proposition 2.1 in \cite{bobkov_ledoux_00}. We omit its proof.

In order to prove \eqref{bound_on_s_square}, choose
$F(\omega)=\omega(0)-\omega(e)$ in \eqref{brascamp-lieb} and note that the second term on the right-hand side of the inequality vanishes. We get
\begin{align}
\label{zdot}
\frac{\d}{\d \lambda} Z(\lambda) \le
\frac{\beta}{c-\lambda}
Z(\lambda)
\end{align}
with some explicit constant $\beta<\infty$. Hence, for $\lambda\in[0,c)$,
\begin{align}
\label{Z-finite}
Z(\lambda)\le (1-(\lambda/c))^{-\beta}<\infty.
\end{align}
Now, \eqref{bound_on_s_square} follows from \eqref{s_small} and \eqref{Z-finite}.

In order to get
\begin{align}
\label{bound_on_phitilde} 
(\wt\varphi_l, (-\Delta)^{-1}\wt\varphi_l)<\infty,
\end{align}
more argument is needed. To keep notation simple, we fix $l=1$ and drop the subscript. Denote
\begin{align}
\label{phi_tilde_cov}
C(x):=\expect{\wt\varphi(\omega)\wt\varphi(\tau_x\omega)}, \qquad
\wh C(p):=\sum_{x\in\Z^d}e^{ip\cdot x}C(x), \,\,\, p\in[-\pi,\pi]^d.
\end{align}
The bound \eqref{bound_on_phitilde}  is equivalent to the infrared bound
\begin{align}
\label{bound_on_C_hat_1}
\int_{[-\pi,\pi]^d} \frac{\wh C(p)}{\wh D(p)}\,\d p <\infty
\end{align}
where
\begin{align}
\wh D:[-\pi,\pi]^d\to[0,2d], \qquad
\wh D(p):=\sum_{l=1}^d (1-\cos p_l).
\end{align}
Since $d\ge 3$, it is sufficient to prove
\begin{align}
\label{bound_on_C_hat_2}
\sup_{p\in [-\pi,\pi]^d} \abs{\wh C(p)} <\infty.
\end{align}

\begin{lemma}
\label{lemma:covbounds}
\newcounter{sz}
\begin{list}{(\alph{sz})}{\usecounter{sz}\setlength{\leftmargin}{1em}}

\item
Let $f:\R\to\R$ be smooth and denote
\begin{align}
C(x)
&:=
\cov{f(\omega(0)-\omega(e))}{f(\omega(x)-\omega(x+e))},
\\
C^{\prime}(x)
&:=
\cov{f^{\prime}(\omega(0)-\omega(e))}{f^{\prime}(\omega(x)-\omega(x+e))},
\\
m^{\prime}
&:=
\expect{f^{\prime}(\omega(0)-\omega(e))}.
\end{align}
Then
\begin{align}
\label{covbound1}
\sup_{p\in[-\pi,\pi]^d} \abs{\wh C(p)}
\le
(cd)^{-1} \sup_{p\in[-\pi,\pi]^d} \abs{\wh C'(p)} + c^{-1}(m')^2.
\end{align}

\item
Let
\begin{align}
\label{defCnm}
C_{nm}(x)
:=
\cov{(\omega(0)-\omega(e))^n}{(\omega(x)-\omega(x+e))^m}.
\end{align}
Then
\begin{align}
\label{covbound2}
\sup_{p\in[-\pi,\pi]^d}\abs{\wh C_{nm}(p)} \le
(Z(c/2))^2 n! m! \left(\frac2{c}\right)^{(n+m)/2}.
\end{align}

\item
If $r$ is an entire function and it satisfies condition \eqref{r_entire},
then
\begin{equation}
\sup_{p\in[-\pi,\pi]^d}\abs{\wh C(p)}<\infty.
\end{equation}

\end{list}
\end{lemma}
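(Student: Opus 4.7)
The plan is to establish (a), (b), (c) sequentially, each building on the previous.

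\textbf{Part (a).} Since $C$ is positive definite (being the covariance kernel of the stationary field $z\mapsto f(\omega(z)-\omega(z+e))$), $\hat C(p)\ge 0$, and
$$\sup_p|\hat C(p)|=\sup_{\|g\|_{\ell^2}=1}\var{H_g},\qquad H_g:=\sum_{z\in\Z^d}g(z)\,f(\omega(z)-\omega(z+e)).$$
I would apply Lemma~\ref{lemma:brascamp_lieb} at $\lambda=0$ (the plain Brascamp--Lieb inequality) to $H_g$, obtaining $\var{H_g}\le c^{-1}\expect{\langle\partial H_g,(-\Delta)^{-1}\partial H_g\rangle_{\ell^2}}$. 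A direct computation yields $\partial_z H_g=g(z)h(z)-g(z-e)h(z-e)$ with $h(z):=f'(\omega(z)-\omega(z+e))$. Writing $h=h_0+m'$ with $\expect{h_0}\equiv 0$ splits $\partial H_g$ into a mean-zero fluctuation $\xi$ plus a deterministic piece $m'[g(z)-g(z-e)]$, and the cross term vanishes under expectation.

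Both resulting Dirichlet-type forms diagonalize in Fourier. The deterministic part yields at most $c^{-1}(m')^2\|g\|^2$ using $(1-\cos(p\cdot e))/\hat D(p)\le 1$. For the fluctuation part, $\hat\xi(p)=(1-e^{-ip\cdot e})\widehat{gh_0}(p)$, and a routine Parseval computation bounds $\expect{|\widehat{gh_0}(p)|^2}\le\sup_q|\hat C'(q)|\cdot\|g\|^2$; the identity
$$(2\pi)^{-d}\int_{[-\pi,\pi]^d}\frac{1-\cos(p\cdot e)}{\hat D(p)}\,\d p=\frac1d,$$
which follows from $\sum_{k=1}^d(1-\cos p_k)=\hat D(p)$ and the symmetry across coordinate directions, supplies the coefficient $(cd)^{-1}$ in front of $\sup|\hat C'|$.

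\textbf{Part (b).} I would iterate (a) along the monomial ladder $f(u)=u^n$: then $C'=n^2\,C_{n-1,n-1}$ and $m'=n\,m_{n-1}$ with $m_k:=\expect{(\omega(0)-\omega(e))^k}$. Since $R$ is even, $\pi$ is invariant under $\omega\mapsto-\omega$, so $m_k=0$ for odd $k$; for even $k=2j$, Taylor-expanding $Z(\lambda)$ at $\lambda=c/2$ and using \eqref{Z-finite} yields $m_{2j}\le Z(c/2)\,j!\,(2/c)^j$. Setting $A_n:=\sup_p|\hat C_{nn}(p)|$, part (a) gives the recursion
$$A_n\le \frac{n^2}{cd}A_{n-1}+\frac{n^2}{c}m_{n-1}^2,$$
from which induction on $n$ delivers $A_n\le Z(c/2)^2(n!)^2(2/c)^n$. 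The off-diagonal bound follows from the pointwise Cauchy--Schwarz inequality $|\hat C_{nm}(p)|^2\le\hat C_{nn}(p)\hat C_{mm}(p)$ after taking suprema in $p$.

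\textbf{Part (c).} Since $r$ is entire, expand $r(u)=\sum_{n\ge 0}r^{(n)}(0)u^n/n!$, substitute bilinearly in the covariance, and apply (b) term by term:
$$\sup_p|\hat C(p)|\le Z(c/2)^2\Bigl(\sum_{n\ge 0}|r^{(n)}(0)|(2/c)^{n/2}\Bigr)^2,$$
which is finite by assumption \eqref{r_entire}.

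The hardest step is closing the induction in (b) with the stated constants: the parity argument eliminates the odd-index moment contributions, and for $n$ odd one uses the crude inequality $n^2\bigl(((n-1)/2)!\bigr)^2\le (n!)^2$ together with $d\ge 1$ to absorb the $m_{n-1}^2$ term into the main recurrence without worsening the constant $Z(c/2)^2(n!)^2(2/c)^n$.
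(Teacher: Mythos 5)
Your proof is correct and follows essentially the same route as the paper: Brascamp--Lieb (Lemma~\ref{lemma:brascamp_lieb} with $\lambda=0$) applied to linear combinations $\sum_z g(z)f(\omega(z)-\omega(z+e))$ and the Fourier kernel $(1-\cos p_1)/\wh D(p)$ with normalized integral $1/d$ for (a), the same monomial recursion with the moment bound $m_{2j}\le Z(c/2)\,j!\,(2/c)^j$ for (b), and termwise power-series summation under \eqref{r_entire} for (c). Your bookkeeping differs only cosmetically (variational/operator-norm formulation of $\sup_p|\wh C(p)|$, splitting off the mean $m'$, and closing (b) by induction using the vanishing odd moments instead of the paper's explicit sum \eqref{covbound3}), and these variants all check out.
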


\begin{proof}
\newcounter{sza}
\begin{list}{(\alph{sza})}{\usecounter{sza}\setlength{\leftmargin}{1em}}

\item
We apply \eqref{brascamp-lieb} with $\lambda=0$ and
\begin{align}
\label{our_F}
F(\omega):=\sum_{x\in\Z^d}\alpha(x) f(\omega(x)-\omega(x+e))
\end{align}
where $\alpha:\Z^d\to\R$ is finitely supported and $\sum_{z\in\Z^d}\alpha(z)=0$. Straightforward computations yield
\begin{align}
\label{bound1}
\sum_{x,y\in\Z^d}\alpha(x)C(x-y)\alpha(y)
\le
c^{-1} \sum_{x,y\in\Z^d} \alpha(x) \Gamma(x-y) \big(C^{\prime}(x-y)+(m^{\prime})^2\big) \alpha(y)
\end{align}
where $\Gamma$ is the matrix
\begin{equation}
\label{matrix_R}
\Gamma:=\nabla_1(-\Delta^{-1})\nabla_1,
\end{equation}
well-defined in any dimension. Its Fourier transform is
\begin{equation}
\label{R_hat}
\wh \Gamma(p)=\frac{1-\cos p_1}{\wh D(p)}.
\end{equation}

The bound \eqref{bound1} is equivalent to
\begin{align}
\label{bound2}
\wh C(p) \le c^{-1}\left(\wh \Gamma*\wh C'(p) + (m')^2 \wh \Gamma(p) \right).
\end{align}
Convolution is meant periodically in $[-\pi,\pi]^d$. Hence
\begin{align}
\notag
\sup_{p\in[-\pi,\pi]^d} \abs{\wh C(p)}
&
\le
c^{-1} \sup_{p\in[-\pi,\pi]^d} \abs{\wh C'(p)} \int_{[-\pi,\pi]^d}\wh \Gamma(p)\,\d p +
c^{-1}(m')^2 \sup_{p\in[-\pi,\pi]^d} \wh\Gamma(p)
\\
\label{bound3}
&
=
(cd)^{-1} \sup_{p\in[-\pi,\pi]^d} \abs{\wh C'(p)}+
c^{-1} (m')^2.
\end{align}

\item
We apply \eqref{covbound1} to the function $f(u)=u^n$ and the notation
\begin{equation}
m_n:=\expect{(\omega(0)-\omega(e))^n},
\end{equation}
to get
\begin{equation}
\sup_{p\in[-\pi,\pi]^d} \abs{\wh C_{nn}(p)}
\le
(cd)^{-1} n^2 \sup_{p\in[-\pi,\pi]^d} \abs{\wh C_{n-1,n-1}(p)} + c^{-1} n^2 m_{n-1}^2.
\end{equation}
Induction on $n$ yields
\begin{equation}
\label{covbound3}
\sup_{p\in[-\pi,\pi]^d} \abs{\wh C_{nn}(p)}
\le
\sum_{k=1}^n
\frac{(n!)^2}{((n-k)!)^2} \frac{m_{n-k}^2}{c^kd^{k-1}}.
\end{equation}

By \eqref{Z-finite}, we have the finiteness of
\begin{equation}
\expect{\exp\left( {c} (\omega(0)-\omega(e))^2\right)/2}
=
Z({c}/2)
<\infty.
\end{equation}
Hence, by expanding the exponential,
\begin{equation}
m_n:=
\expect{(\omega(0)-\omega(e))^n}
\le
Z({c}/2)
\frac{2^{n/2}\lfloor n/2\rfloor!}{c^{n/2}}
\end{equation}
follows. We neglect the fact that the odd moments are $0$ by symmetry. Combine the last inequality with \eqref{covbound3} we obtain
\begin{align}
\label{covbound4}
\sup_{p\in[-\pi,\pi]^d} \abs{\wh C_{nn}(p)}
&\le
(Z({c}/2))^2(n!)^2 c^{-n}
\sum_{k=1}^n
\frac{(\lfloor (n-k)/2\rfloor!)^2}{((n-k)!)^2}
\frac{2^{n-k}}{d^{k-1}}
\\
&\le
(Z({c}/2))^2(n!)^2 \left(\frac2{c}\right)^n,
\end{align}
which proves \eqref{covbound2} for $n=m$. The constant $2/c$ is far from optimal here, but the order $(n!)^2$ is the best one can get with this argument.

The general case $n\not=m$ follows by Schwarz's inequality.

\item
By power series expansion of the entire function $r$
\begin{equation}
\cov{r(\omega(0)-\omega(e))}{r(\omega(x)-\omega(x+e))}=\sum_{n=0}^\infty
\sum_{m=0}^\infty \frac{r^{(n)}(0)}{n!} \frac{r^{(m)}(0)}{m!} C_{nm}(x).
\end{equation}
Hence, using \eqref{covbound2} and \eqref{r_entire}
\begin{align}
\abs{\wh C(p)}
&\le
4
\sum_{n=0}^\infty \sum_{m=0}^\infty
\frac{\abs{r^{(n)}(0)}}{n!} \frac{\abs{r^{(m)}(0)}}{m!} \abs{\wh C_{nm}(p)}.
\\
\notag
&
\le
4(Z({c}/2))^2
\left(\sum_{n=0}^\infty
\abs{r^{(n)}(0)} \left(\frac2{c}\right)^{n/2}\right)^2<\infty.
\end{align}

\end{list}
\end{proof}

\section{The Gaussian case}
\label{s:gaussian}

\subsection{Hilbert spaces}
\label{ss:Hilbert_spaces}

In the case where $r(u)=u$, the stationary measure defined by
\eqref{specifications} is Gaussian, and we can build up the Gaussian Hilbert space $\cH=\cL^2(\Omega,\pi)$ and its unitary equivalent representations as Fock spaces in the usual way.

We use the following convention for normalization of  Fourier transform
\begin{equation}
\label{FourierTransform}
\wh u(p)
=
\sum_{x\in\Z^d}e^{i p\cdot x}u(x),
\qquad
u(x)
=
(2\pi)^{-d}\int_{(-\pi,\pi]^d} e^{-i p\cdot x} \wh u(p) \d p,
\end{equation}
and the shorthand notation
\begin{align}
\label{notation1}
&
\vx=(x_1,\dots,x_n)\in\Z^{dn},
&&
x_m=(x_{m1},\dots,x_{md})\in\Z^d,
\\[5pt]
\label{notation3}
&
\vp=(p_1,\dots,p_n)\in(-\pi,\pi]^{dn},
&&
p_m=(p_{m1},\dots,p_{md})\in(-\pi,\pi]^d,
\end{align}
$m=1,\dots,n$.

We denote by $\cS_n$, respectively, $\wh \cS_n$, the space of symmetric functions of $n$ variables on $\Z^d$, respectively, on $(-\pi,\pi]^d$:
\begin{align}
\label{Sndef}
\cS_n:=
&
\{u:\Z^{dn}\to\C: u(\varpi\vx)=u(\vx),\,
\varpi\in\Perm(n)\},
\\[5pt]
\label{hSndef}
\wh \cS_n:=
&
\{\wh u:[-\pi,\pi]^{dn}\to\C: \wh u(\varpi\vp)=\wh
u(\vp),\, \varpi\in\Perm(n)\}.
\end{align}
In the preceding formulas $\Perm(n)$ denotes the symmetric group of permutations acting on the $n$ indices.

As noted before, in the case of $r(u)=u$, the random variables  $\big(\omega(x):x\in\Z^d\big)$ form the \emph{massless free Gaussian field} on $\Z^d$ with expectation and covariances given in \eqref{mfgfcovar}. The Fourier transform of the covariances is
\begin{align}
\wh b(p)=\wh D(p)^{-1}.
\end{align}

We endow the spaces $\cS_n$, respectively, $\wh \cS_n$
with the following scalar products
\begin{align}
\label{Knscprod}
&
\langle u,v\rangle:=
\sum_{\vx\in\Z^{dn}}\sum_{\vy\in\Z^{dn}}
\overline{u(\vx)}b(\vx-\vy)v(\vy),
\\[5pt]
\label{hKnscprod}
&
\langle \wh u,\wh v\rangle:= \int_{[-\pi,\pi]^{dn}}
\overline{\wh u(\vp)} \wh b(\vp) \wh v(\vp) \,\d\vp
\end{align}
where
\begin{equation}
b(\vx-\vy):=\prod_{m=1}^n b(x_m-y_m), \qquad \wh b(\vp):=\prod_{m=1}^n \wh
b(p_m).
\end{equation}
Let $\cK_n$ and $\wh \cK_n$ be the closures of $\cS_n$, respectively, $\wh \cS_n$ with respect to the Euclidean norms defined by these inner products. The Fourier transform \eqref{FourierTransform} realizes an isometric isomorphism
between the Hilbert spaces $\cK_n$ and $\wh \cK_n$.

These Hilbert spaces are actually the symmetrized $n$-fold tensor products
\begin{equation}
\label{KnhKn} 
\cK_n:=\mathrm{symm}\big(\cK_1^{\otimes n}\big), \qquad
\cK_n:=\mathrm{symm}\big(\wh\cK_1^{\otimes n}\big).
\end{equation}
Finally, the full Fock spaces are
\begin{equation}
\label{Fockspaces} 
\cK:=\overline{\oplus_{n=0}^\infty \cK_n}, \qquad
\wh\cK:=\overline{\oplus_{n=0}^\infty \wh\cK_n}.
\end{equation}

The Hilbert space of our true interest is $\cH=\cL^2(\Omega,\pi)$. This is itself a graded Gaussian Hilbert space
\begin{equation}
\label{Hgraded} 
\cH=\overline{\oplus_{n=0}^\infty \cH_n}
\end{equation}
where the subspaces $\cH_n$ are isometrically isomorphic with the subspaces $\cK_n$ of $\cK$ through the identification
\begin{equation}
\label{HKisometry} 
\phi_n: \cK_n\to\cH_n, \quad
\phi_n(u):=\frac{1}{\sqrt{n!}}\sum_{\vx\in\Z^{dn}}
u(\vx)\wick{\omega(x_1)\dots\omega(x_n)}.
\end{equation}
Here and in the rest of this paper, we denote by \wick{X_1\dots X_n} the Wick product of the jointly Gaussian random variables $(X_1,\dots,X_n)$.

As the graded Hilbert spaces
\begin{equation}
\cH:=\overline{\oplus_{n=0}^\infty \cH_n}, \quad
\cK:=\overline{\oplus_{n=0}^\infty \cK_n}, \quad \wh
\cK:=\overline{\oplus_{n=0}^\infty \wh\cK_n}
\end{equation}
are isometrically isomorphic in a natural way, we shall move freely between the various representations.

\subsection{Operators}
\label{ss:operators}

First we give the action of the operators $\nabla_e$, $\Delta$, etc.\ introduced in Subsection \ref{ss:basic_operators} on the spaces $\cH_n$, $\cK_n$ and $\wh\cK_n$. The point is that we are interested primarily in their action on the space $\cL^2(\Omega,\pi) = \overline{\oplus_{n=0}^\infty\cH_n}$, but explicit computations in later sections are handy in the unitary equivalent representations over the space $\wh\cK=\overline{\oplus_{n=0}^\infty\wh\cK_n}$.
The action of various operators over $\cH_n$ will be given in terms of the Wick monomials $\wick{\omega(x_1)\dots\omega(x_n)}$ and it is understood that the operators are extended by linearity and graph closure.

\begin{itemize}

\item
The operators $\nabla_e$, $e\in\cU$, map $\cH_n\to\cH_n$, $\cK_n\to\cK_n$,  $\wh\cK_n\to\wh\cK_n$, in turn, as follows:
\begin{align}
\label{nablaonHn}
&
\nabla_e\wick{\omega(x_1)\dots\omega(x_n)}
=
\wick{\omega(x_1+e)\dots\omega(x_n+e)}-\wick{\omega(x_1)\dots\omega(x_n)},
\\[3pt]
\label{nablaonKn}
&
\nabla_e u(\vx) 
=
u(x_1-e,\dots,x_n-e)-u(x_1,\dots,x_n),
\\[3pt]
\label{nablaonhKn}
&
\nabla_e \wh u(\vp) 
=
\left(
\exp\left(i\textstyle\sum_{m=1}^n p_{m}\cdot e \right) -1 \right) \wh u(\vp).
\end{align}

\item
The operator $\Delta$ maps $\cH_n\to\cH_n$, $\cK_n\to\cK_n$,  $\wh\cK_n\to\wh\cK_n$, in turn, as follows:
\begin{align}
\label{DeltaonHn}
&
\Delta \wick{\omega(x_1)\dots\omega(x_n)}
=
\sum_{e\in\cU}
\wick{\omega(x_1+e),\dots,\omega(x_n+e)}
-2d \wick{\omega(x_1)\dots\omega(x_n)},
\\[3pt]
\label{DeltaonKn}
&
\Delta u(\vx)
=
\sum_{e\in\cU}
u(x_1+e,\dots,x_n+e)-2d u(\vx),
\\[3pt]
\label{DeltaonhKn}
&
\Delta \wh u(\vp) =
-2\wh D \left(\textstyle\sum_{m=1}^n p_m\right) \wh u(\vp).
\end{align}

\item
The operators $\abs{\Delta}^{-1/2}\nabla_e$ map $\cH_n\to\cH_n$, $\cK_n\to\cK_n$,  $\wh\cK_n\to\wh\cK_n$. There is no explicit expression for the first two. The action $\wh\cK_n\to\wh\cK_n$ is as follows:,
\begin{align}
\label{nonameopsonhKn}
&
\abs{\Delta}^{-1/2}\nabla_e \wh u(\vp)
=
\frac{\exp \left(i\textstyle\sum_{m=1}^n p_{m}\cdot e \right) - 1} {\sqrt{2\wh D \left(\textstyle\sum_{m=1}^n p_m\right)}} \wh u(\vp).
\end{align}
These are \emph{bounded} operators with  norm
\begin{equation}
\label{nonameopnorm}
\norm{ \abs{\Delta}^{-1/2}\nabla_e } =1.
\end{equation}

\item
The creation operators $a^*_e$, $e\in\cU$, map $\cH_n\to\cH_{n+1}$, $\cK_n\to\cK_{n+1}$,  $\wh\cK_n\to\wh\cK_{n+1}$, in turn, as follows:
\begin{align}
\label{astaronHn}
&
a^*_e\wick{\omega(x_1)\dots\omega(x_n)} =
\wick{(\omega(0)-\omega(e))\omega(x_1)\dots\omega(x_n)},
\\[5pt]
\label{astaronKn}
&
a^*_e u(x_1,\dots,x_{n+1})
=
\frac{1}{\sqrt{n+1}}
\sum_{m=1}^{n+1}
\big(\delta_{x_m,0}-\delta_{x_m,e}\big)
u(x_1,\dots, \cancel{x_{m}},\dots, x_{n+1}) ,
\\[3pt]
\label{astaronhKn}
&
a^*_e \wh u(p_1,\dots,p_{n+1})
=
\frac{1}{\sqrt{n+1}}
\sum_{m=1}^{n+1}
\big(e^{i p_m\cdot e} -1 \big)
\wh u(p_1,\dots, \cancel{p_{m}},\dots, p_{n+1}).
\end{align}
The creation operators $a^*_e$, restricted to the subspaces $\cH_n$, $\cK_n$, respectively, $\wh\cK_n$ are bounded with operator norm
\begin{equation}
\label{astaropnorm}
\norm{a^*_e\upharpoonright_{\cH_n}} =
\norm{a^*_e\upharpoonright_{\cK_n}} = \norm{a^*_e\upharpoonright_{\wh\cK_n}}
= (b(0)-b(e))^{1/2} \sqrt{n+1}.
\end{equation}

\item
The annihilation operators $a_e$, $e\in\cU$, map $\cH_n\to\cH_{n-1}$, $\cK_n\to\cK_{n-1}$,  $\wh\cK_n\to\wh\cK_{n-1}$, in turn, as follows:
\begin{align}
\label{aonHn}
&
a_e \wick{\omega(x_1)\dots\omega(x_n)}
=
\sum_{m=1}^n
\big(b(x_m+e)-b(x_m)\big)
\wick{\omega(x_1)\dots\cancel{\omega(x_{m})}\dots\omega(x_n)},
\\[5pt]
\label{aonKn}
&
a_e u(x_1,\dots,x_{n-1}) =
\sqrt{n}
\sum_{z\in\Z^d} \big(b(z+e)-b(z)\big) u(x_1,\dots,x_{n-1},z),
\\[3pt]
\label{aonhKn}
&
a_e \wh u(p_1,\dots,p_{n-1})
=
\sqrt{n}
(2\pi)^{-d} \int_{[-\pi,\pi]^d}
\big(e^{-i q\cdot e} -1\big)
\wh b(q)  \wh u(p_1,\dots,p_{n-1},q)\,\d q.
\end{align}
The annihilation operators $a_e$ restricted to the subspaces $\cH_n$, $\cK_n$, respectively, $\wh\cK_n$ are bounded with operator norm
\begin{equation}
\label{aopnorm}
\norm{a_e\upharpoonright_{\cH_n}}=
\norm{a_e\upharpoonright_{\cK_n}}=
\norm{a_e\upharpoonright_{\wh\cK_n}} =
(b(0)-b(e))^{1/2}\sqrt{n}.
\end{equation}
As the notation $a^*_e$ and $a_e$ suggests, these operators are adjoint of each other.
\end{itemize}

In order to express the infinitesimal generator in the Gaussian case, two more observations are needed. Both follow from  standard facts in the context of Gaussian Hilbert spaces, or Malliavin calculus.  First, the operator of multiplication by $\omega(0)-\omega(e)$, acting on $\cH$, is $a^*_e+a_e$. Hence, the multiplication operators $M_e$ and $N_e$ defined in \eqref{multops}, in the Gaussian case, are
\begin{align}
\label{gaussianmultops}
N_e=a^*_e+a_e,
\qquad
M_e= s(a^*_e+a_e).
\end{align}
Second, from the formula of \emph{directional derivative} in $\cH$, it follows that
\begin{align}
\label{gaussiandirder}
\partial=\sum_{e\in\cU} a_e.
\end{align}

Using these identities, after simple manipulations, we obtain
\begin{align}
\label{opsgrading}
S_1
&=
\frac12 \sum_{e\in\cU} \nabla_{-e} s(a_e^*+a_l) \nabla_e,
\\[3pt]
A
&=
\sum_{e\in\cU} \nabla_{-e} a_e - \sum_{e\in\cU}  a^*_e\nabla_{-e}
=:
A_- - A_+.
\end{align}
Note that
\begin{align}
\label{opgrading}
\displaystyle
A_\pm:\cH_n\to\cH_{n\pm1},
\qquad
S_1: \cH_{n}\to\oplus_{j=-q}^q\cH_{n+2j},
\end{align}
where $2q$ is the degree of the even polynomial $s(u)$.

\section{CLT for additive functionals of ergodic Markov processes, graded sector condition}
\label{s:KV}

In the present section we recall the non-reversible version of the Kipnis\,--\,Varadhan CLT for additive functionals of ergodic Markov processes and present a slightly enhanced version of the \emph{graded sector condition} of Sethuraman, Varadhan and Yau, \cite{sethuraman_varadhan_yau_00}.

Let $(\Omega, \cF, \pi)$ be a probability space: the state space of a \emph{stationary and ergodic} Markov process  $t\mapsto\eta(t)$. We put ourselves in the Hilbert space $\cH:=\cL^2(\Omega, \pi)$. Denote the \emph{infinitesimal generator} of the semigroup of the process by $G$, which is
a well-defined (possibly unbounded) closed linear operator on $\cH$. The adjoint generator $G^*$ is the infinitesimal generator of the semigroup of the reversed (also stationary and ergodic) process $\eta^*(t)=\eta(-t)$. It is assumed that $G$ and $G^*$ have a \emph{common core of definition} $\cC\subseteq\cH$. Let $f\in\cH$, such that $(f, \one) = \int_\Omega f\,\d\pi=0$. We ask about CLT/invariance principle for
\begin{equation}
\label{rescaledintegral}
N^{-1/2}\int_0^{Nt} f(\eta(s))\,\d s
\end{equation}
as $N\to\infty$.

We denote the \emph{symmetric} and \emph{anti-symmetric} parts of the generators $G$, $G^*$, by
\begin{equation}
S:=-\frac12(G+G^*),
\qquad
A:=\frac12(G-G^*).
\end{equation}
These operators are also extended from $\cC$ by graph closure and it is assumed that they are well-defined self-adjoint, respectively, skew self-adjoint operators
\begin{equation}
S^*=S\ge0, \qquad A^*=-A.
\end{equation}
Note that $-S$ is itself the infinitesimal generator of a Markov semigroup  on $\cL^2(\Omega,\pi)$, for which the probability measure $\pi$ is reversible (not just stationary). We assume that $-S$ is itself ergodic:
\begin{equation}
\label{Sergodic}
\mathrm{Ker}(S)=\{c1\!\!1 : c\in\C\}.
\end{equation}

We denote by $R_\lambda\in\cB(\cH)$ the resolvent of the semigroup $s\mapsto e^{sG}$:
\begin{equation}
R_\lambda
:=
\int_0^\infty e^{-\lambda s} e^{sG}\d s
=
\big(\lambda I-G\big)^{-1}, \qquad \lambda>0,
\end{equation}
and given $f\in\cH$ as above, we will use the notation
\begin{equation}
u_\lambda:=R_\lambda f.
\end{equation}

The following theorem yields the efficient martingale approximation of the additive functional \eqref{rescaledintegral}:

\begin{theorem*}[\bf KV]
\label{thm:kv}
With the notation and assumptions as before, if the following two limits hold in $\cH$:
\begin{align}
\label{conditionA}
&
\lim_{\lambda\to0}
\lambda^{1/2} u_\lambda=0,
\\[5pt]
\label{conditionB}
&
\lim_{\lambda\to0} S^{1/2} u_\lambda=:v\in\cH,
\end{align}
then
\begin{equation}
\label{kv_variance}
\sigma^2:=2\lim_{\lambda\to0}(u_\lambda,f)\in[0,\infty),
\end{equation}
and there exists a zero mean, $\cL^2$-martingale $M(t)$ adapted to the filtration of the Markov process $\eta(t)$ with stationary and ergodic increments and variance
\begin{equation}
\expect{M(t)^2}=\sigma^2t
\end{equation}
such that
\begin{equation}
\label{kv_martappr} 
\lim_{N\to\infty} N^{-1} \expect{\big(\int_0^N
f(\eta(s))\,\d s-M(N)\big)^2} =0.
\end{equation}
In particular, if $\sigma>0$, then the finite dimensional marginal distributions of the rescaled process $t\mapsto \sigma^{-1} N^{-1/2}\int_0^{Nt}f(\eta(s))\,\d s$ converge to those of a standard $1d$ Brownian motion.
\end{theorem*}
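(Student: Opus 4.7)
The plan is to produce a martingale approximation of $\int_0^t f(\eta(s))\,\d s$ using the resolvent function $u_\lambda$, following the classical Kipnis\,--\,Varadhan scheme adapted to the non-reversible setting. For each $\lambda>0$, I apply Dynkin's formula to define the $\cL^2$-martingale
\begin{equation*}
M_\lambda(t) := u_\lambda(\eta(t))-u_\lambda(\eta(0))-\int_0^t G u_\lambda(\eta(s))\,\d s,
\end{equation*}
which has stationary, ergodic increments since $\pi$ is a stationary ergodic measure for $\eta(t)$. Rewriting $f = \lambda u_\lambda - Gu_\lambda$ yields the decomposition
\begin{equation*}
\int_0^t f(\eta(s))\,\d s = M_\lambda(t) + \big[u_\lambda(\eta(0))-u_\lambda(\eta(t))\big] + \lambda\int_0^t u_\lambda(\eta(s))\,\d s.
\end{equation*}
I would then show that (i) $M_\lambda$ is Cauchy in $\cL^2$ and converges to a limit martingale $M$ of variance $\sigma^2 t$ as $\lambda\to0$, and (ii) the two ``boundary'' terms on the right vanish after $N^{-1/2}$-rescaling.

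For (i), the carré-du-champ identity
\begin{equation*}
\expect{M_\lambda(t)^2} = t\int_\Omega \bigl(G(u_\lambda^2)-2u_\lambda Gu_\lambda\bigr)\,\d\pi = 2t\,(u_\lambda,Su_\lambda) = 2t\,\|S^{1/2}u_\lambda\|^2
\end{equation*}
holds because $\pi$ is $G$-stationary and the skew-symmetric part $A$ contributes zero. Applied to $u_\lambda-u_\mu$, this gives $\expect{(M_\lambda(t)-M_\mu(t))^2}=2t\|S^{1/2}(u_\lambda-u_\mu)\|^2\to 0$ by condition \eqref{conditionB}. The limit martingale $M$ inherits stationary, ergodic increments and satisfies $\expect{M(t)^2}=2t\|v\|^2$. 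Splitting $G=-S+A$ and using $(u_\lambda,Au_\lambda)=0$ I get $(u_\lambda,f)=\lambda\|u_\lambda\|^2+\|S^{1/2}u_\lambda\|^2$; the first term vanishes by \eqref{conditionA} and the second converges to $\|v\|^2$ by \eqref{conditionB}, so $\sigma^2 = 2\lim_{\lambda\to 0}(u_\lambda,f) = 2\|v\|^2\in[0,\infty)$, as claimed.

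For (ii), stationarity gives $\expect{u_\lambda(\eta(0))^2}=\expect{u_\lambda(\eta(t))^2}=\|u_\lambda\|^2$ and Cauchy\,--\,Schwarz gives $\expect{(\lambda\int_0^t u_\lambda\,\d s)^2}\le \lambda^2 t^2\|u_\lambda\|^2$. Choosing $t=N$ and $\lambda=1/N$, all three remainder contributions are of order $\|u_\lambda\|^2 = \lambda^{-1}\|\lambda^{1/2}u_\lambda\|^2$, so
\begin{equation*}
N^{-1}\expect{\Bigl(\int_0^N f(\eta(s))\,\d s - M_\lambda(N)\Bigr)^2} \le C\,\|\lambda^{1/2}u_\lambda\|^2 \to 0
\end{equation*}
by \eqref{conditionA}. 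Together with the Cauchy property from (i), this yields the martingale approximation \eqref{kv_martappr}. The finite-dimensional CLT for $\sigma^{-1}N^{-1/2}\int_0^{Nt}f(\eta(s))\,\d s$ then reduces to the CLT for $\sigma^{-1}N^{-1/2}M(Nt)$, which is the standard invariance principle for $\cL^2$-martingales with stationary, ergodic increments.

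The principal technical subtlety is to justify the carré-du-champ computation rigorously for $u_\lambda$, which a priori lies only in $\cD(G)$ and not necessarily in the common core $\cC$; I would handle this by a density/approximation argument inside $\cC$, noting that both sides of the identity are continuous in the graph norm of $G$. Beyond this, the whole argument is an algebraic rearrangement of the resolvent equation, and conditions \eqref{conditionA} and \eqref{conditionB} are used in essentially orthogonal roles: (A) controls the non-martingale remainder terms, while (B) ensures the existence and non-degeneracy of the limiting martingale.
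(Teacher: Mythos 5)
Your proposal is correct and is essentially the argument the paper relies on: the paper does not reproduce a proof of Theorem KV but cites \cite{kipnis_varadhan_86}, \cite{toth_86}, \cite{varadhan_96} and remarks that the proof is the original Kipnis\,--\,Varadhan argument with spectral calculus replaced by resolvent calculus, which is exactly your scheme (Dynkin martingale $M_\lambda$ for $u_\lambda=R_\lambda f$, the identity $\expect{M_\lambda(t)^2}=2t\,\norm{S^{1/2}u_\lambda}^2$, Cauchy property via \eqref{conditionB}, and the boundary/compensator remainders controlled via \eqref{conditionA} with $\lambda\sim 1/N$, followed by the martingale CLT with stationary ergodic increments). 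The only caveat, which you already flag, is the domain bookkeeping needed to justify $(u_\lambda,Au_\lambda)=0$ and the variance identity for $u_\lambda\in\cD(G)$; this is handled in the cited references via the common core assumption and is equivalently encoded in the paper's condition \eqref{conditionC} and identity \eqref{A+B=C}.
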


\bigskip
\noindent
{\bf Remarks:}

\paragraph{(1)}
Conditions \eqref{conditionA} and \eqref{conditionB} of the theorem are jointly equivalent to the following
\begin{equation}
\label{conditionC}
\lim_{\lambda,\lambda'\to0}(\lambda+\lambda')(u_\lambda,u_{\lambda'})=0.
\end{equation}
Indeed, straightforward computations yield:
\begin{equation}
\label{A+B=C}
(\lambda+\lambda')(u_\lambda,u_{\lambda'}) =
\norm{S^{1/2}(u_\lambda-u_{\lambda'})}^2 + \lambda \norm{u_\lambda}^2 +
\lambda' \norm{u_{\lambda'}}^2.
\end{equation}

\paragraph{(2)}
The theorem is a generalization to non-reversible setup of the
celebrated Kipnis\,--\,Varadhan theorem, \cite{kipnis_varadhan_86}. To the best of our knowledge, the non-reversible formulation, proved with resolvent rather
than spectral calculus, appears first -- in discrete-time Markov chain, rather than continuous-time Markov process setup and with condition \eqref{conditionC} -- in \cite{toth_86} where it was applied, with bare hand computations, to obtain CLT for a particular random walk in random environment. Its proof follows the original proof of the  Kipnis\,--\,Varadhan theorem with the difference that spectral calculus is to be replaced by resolvent calculus.

\paragraph{(3)}
In continuous-time Markov process setup, it was formulated in
\cite{varadhan_96} and applied to tagged particle motion in non-reversible zero mean exclusion processes. In this paper, the \emph{(strong) sector condition} was formulated, which, together with an $H_{-1}$-bound on the function $f\in\cH$, provide sufficient condition for \eqref{conditionA} and
\eqref{conditionB} of Theorem KV to hold.

\paragraph{(4)}
In \cite{sethuraman_varadhan_yau_00}, the so-called \emph{graded sector condition} is formulated and Theorem KV is applied to tagged particle diffusion in general (non-zero mean) non-reversible exclusion processes, in $d\ge3$. The fundamental ideas related to the graded sector condition have their origin partly in \cite{landim_yau_97}. In Theorem GSC below we quote -- and slightly enhance -- the formulation in \cite{olla_01} and \cite{komorowski_landim_olla_09}.

\paragraph{(5)}
For a more complete list of applications of Theorem KV together with the strong and graded sector conditions, see the surveys \cite{olla_01}, \cite{komorowski_landim_olla_09}.\\

Checking conditions \eqref{conditionA} and \eqref{conditionB} (or, equivalently, condition \eqref{conditionC}) in particular applications is typically not easy. In the applications to RWRE in \cite{toth_86}, the conditions were checked by some tricky bare hand computations. In \cite{varadhan_96}, respectively, \cite{sethuraman_varadhan_yau_00}, the so-called \emph{sector condition}, respectively, the \emph{graded sector condition} were introduced and checked for the respective models.

We reformulate the graded sector condition from \cite{olla_01}, \cite{komorowski_landim_olla_09} in a somewhat enhanced version. The following two conditions jointly imply \eqref{conditionA} and \eqref{conditionB}:
\begin{align}
\label{conditionH-1}
&
f\in\text{Ran}(S^{1/2})
\\
\label{conditionD}
&
\sup_{\lambda>0}
\norm{ S^{-1/2} Gu_\lambda} < \infty.
\end{align}
Assume that the Hilbert space $\cH=\cL^2(\Omega, \pi)$ is graded
\begin{equation}
\label{grading2}
\cH=\overline{\oplus_{n=0}^\infty\cH_n},
\end{equation}
and the infinitesimal generator is consistent with this grading in the following sense:
\begin{align}
\label{Sgrading}
&
S=\sum_{n=0}^\infty\sum_{j=-r}^rS_{n,n+j},
\qquad
S_{n,n+j}:\cH_n\to\cH_{n+j},
\qquad
S_{n,n+j}^*=S_{n+j,n},
\\
\label{Agrading}
&
A=\sum_{n=0}^\infty\sum_{j=-r}^rA_{n,n+j},
\qquad
A_{n,n+j}:\cH_n\to\cH_{n+j},
\qquad
A_{n,n+j}^*=-A_{n+j,n}.
\end{align}
Here and in the sequel the double sum $\sum_{n=0}^\infty\sum_{j=-r}^r \cdots $ is meant in the sense \\ $\sum_{n=0}^\infty\sum_{j=-r}^r \ind{n+j\ge0} \cdots $.

\begin{theorem*}[\bf GSC]
\label{thm:gsc}
Let the Hilbert space and the infinitesimal generator
be graded in the sense specified above. Assume that there exists an operator $D=D^*\geq 0$, which acts diagonally on the grading of $\cH$:
\begin{equation}
D=\sum_{n=0}^\infty D_{n,n},
\qquad
D_{n,n}: \cH_n\to\cH_n,
\end{equation}
such that
\begin{equation}
\label{Dellipticity}
0\le D\leq S.
\end{equation}
Assume also that with some  $C<\infty$ and $2\le\kappa<\infty$ the following bounds hold:
\begin{align}
\label{diagbound}
&
\norm{D_{n,n}^{-1/2} (S_{n,n}+A_{n,n}) D_{n,n}^{-1/2}}
\leq 
C n^{\kappa},
\\[5pt]
\label{oddbound}
&
\norm{ D_{n+j,n+j}^{-1/2} A_{n,n+j} D_{n,n}^{-1/2}}
\leq 
\frac{n}{12 r^2 \kappa}+C,
\qquad
j=\pm1,\dots,\pm r,
\\[5pt]
&
\label{evenbound}
\norm{D_{n+j,n+j}^{-1/2} S_{n,n+j} D_{n,n}^{-1/2}}
\leq 
\frac{n^2}{6 r^3 \kappa^2}+C,
\qquad
j=\pm1,\dots,\pm r,
\end{align}

Under these conditions on the operators, for any function $f\in \oplus_{n=0}^N \cH_n$, with some  $N<\infty$, if
\begin{equation}
\label{Dupperbound}
D^{-1/2}f\in\cH
\end{equation}
then \eqref{conditionH-1} and \eqref{conditionD} follow. As a consequence, the martingale approximation and CLT of Theorem KV hold.
\end{theorem*}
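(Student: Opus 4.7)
The plan is to verify the two conditions \eqref{conditionH-1} and \eqref{conditionD} that together imply \eqref{conditionA}--\eqref{conditionB} of Theorem KV. Of these, \eqref{conditionH-1} is essentially free: the operator inequality $0\le D\le S$ yields $S^{-1}\le D^{-1}$ on $\mathrm{Ran}(S^{1/2})$, whence $\|S^{-1/2}f\|\le\|D^{-1/2}f\|<\infty$ by \eqref{Dupperbound}, so $f\in\mathrm{Ran}(S^{1/2})$. The real content is in \eqref{conditionD}.

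Starting from the resolvent equation $(\lambda I-G)u_\lambda=f$, pairing with $u_\lambda$ and using $A^*=-A$ gives the basic energy identity $\lambda\|u_\lambda\|^2+\langle Su_\lambda,u_\lambda\rangle=\langle f,u_\lambda\rangle$; Cauchy--Schwarz combined with $D\le S$ then yields the uniform a priori bound $\|S^{1/2}u_\lambda\|\le\|D^{-1/2}f\|$. Writing $G=-S+A$, condition \eqref{conditionD} reduces to $\sup_\lambda\|S^{-1/2}Au_\lambda\|<\infty$, and since $S^{-1/2}\le D^{-1/2}$ in the above sense, it suffices to bound $\|D^{-1/2}Au_\lambda\|$. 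Decomposing $u_\lambda=\sum_n u_n^{(\lambda)}$ along the grading and applying \eqref{oddbound} band by band, the problem reduces further to the weighted a priori estimate
\[
\sup_{\lambda>0}\sum_{n=0}^\infty (1+n)^2\,\|D_{n,n}^{1/2}u_n^{(\lambda)}\|^2<\infty.
\]

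The heart of the argument is this weighted bound, which I would establish by a weighted energy estimate. Let $N$ be the number operator $N\upharpoonright_{\cH_n}=nI$, and pair the resolvent equation with $(I+N)^{2\beta}u_\lambda$ rather than $u_\lambda$, for some $\beta$ of order $\kappa$. The symmetric part yields a positive weighted $D$-energy together with commutator terms $[(I+N)^{2\beta},S]$; the antisymmetric part, which would have cancelled without the weight, now contributes commutator terms $[(I+N)^{2\beta},A]$ because $A$ shifts grade by at most $r$ and does not commute with $N$. Using $(n+j)^{2\beta}-n^{2\beta}=O(n^{2\beta-1})$, those commutators are controlled via \eqref{oddbound}, \eqref{evenbound}, and the explicit coefficients $1/(12\, r^2\kappa)$, $1/(6\, r^3\kappa^2)$ in the hypotheses are calibrated so that, after summing over the $2r$ off-diagonal bands, the total commutator contribution is strictly smaller than the positive weighted $D$-energy and can be absorbed on the left. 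The diagonal bound \eqref{diagbound} with factor $n^\kappa$ is the maximal growth compatible with the $(1+n)^{2\beta}$ weight for $\beta\ge\kappa/2$. Since $f$ lives in finitely many grades, the right-hand side $\langle f,(I+N)^{2\beta}u_\lambda\rangle$ remains finite and is closed by another Cauchy--Schwarz.

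The principal obstacle is the delicate bookkeeping in the absorption step: when the $O(n^{2\beta-1})$ telescoping factors from the weight meet the $O(n)$ off-diagonal bounds from \eqref{oddbound} and the $O(n^2)$ bounds from \eqref{evenbound}, one must verify that the accumulated constants (including the $(2r+1)$ band factor) still leave a strictly positive coefficient on the left-hand side. The specific numerical values $1/(12\, r^2\kappa)$ and $1/(6\, r^3\kappa^2)$ are precisely the slack that makes this work. The enhancement over the original graded sector condition of \cite{sethuraman_varadhan_yau_00} lies in admitting polynomial growth $n^\kappa$ of the diagonal block $S_{n,n}+A_{n,n}$ in $D^{-1/2}$-norm, rather than boundedness --- a growth that genuinely occurs in the CLT application of the present paper. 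Once the weighted bound is secured, $\sup_\lambda\|D^{-1/2}Au_\lambda\|<\infty$ follows, verifying \eqref{conditionD}; Theorem KV then delivers the martingale approximation and the CLT.
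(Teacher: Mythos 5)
Your overall strategy—test the resolvent equation against a grade-dependent weight, keep the weighted $S$-energy as the good term, control the weight--generator commutators band by band via \eqref{oddbound}--\eqref{evenbound}, absorb, and use that $f$ has finitely many grades—is the same as the paper's. But there is a genuine gap at the decisive step. You estimate both commutators $[W,A]$ and $[W,S]$ (with $W=(I+N)^{2\beta}$) by the first-order weight difference $(n+j)^{2\beta}-n^{2\beta}=O(n^{2\beta-1})$, i.e.\ a relative factor $O(1/n)$ against the weight $n^{2\beta}$. That is enough for the skew part, whose off-diagonal blocks grow like $n$ by \eqref{oddbound}, but it fails for the symmetric part: \eqref{evenbound} allows $\norm{D_{n+j,n+j}^{-1/2}S_{n,n+j}D_{n,n}^{-1/2}}\sim n^{2}$, so the product is of order $n$ and no calibration of the constants $1/(12r^{2}\kappa)$, $1/(6r^{3}\kappa^{2})$ can make it absorbable. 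The missing idea—precisely the paper's ``enhancement''—is to treat the symmetric commutator through the symmetrized combination $(Tu_\lambda,[S,T]u_\lambda)=\tfrac12\,(u_\lambda,(2TST-ST^2-T^2S)u_\lambda)$, whose matrix elements carry the \emph{squared} differences $\big(t(n)-t(n+j)\big)^2=O(n^{2\beta-2})$ rather than $t(n)^2-t(n+j)^2=O(n^{2\beta-1})$; the extra factor $1/n$ exactly compensates the $n^{2}$ growth permitted in \eqref{evenbound}. Without this cancellation your absorption step breaks down, and you would only recover the original graded sector condition (off-diagonal $S$-blocks of order $n$), not the stated theorem.

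Two further points need repair. First, your intermediate reduction to $\sup_\lambda\sum_n(1+n)^{2}\norm{D^{1/2}_{n,n}u^{(\lambda)}_n}^2<\infty$ is too weak: the diagonal blocks are controlled only jointly, $\norm{D_{n,n}^{-1/2}(S_{n,n}+A_{n,n})D_{n,n}^{-1/2}}\le Cn^{\kappa}$ by \eqref{diagbound} (so $A_{n,n}$ alone is not $O(n)$), and hence the bound on $\norm{S^{-1/2}Gu_\lambda}$ requires the weight $n^{2\kappa}$ as in \eqref{kappaineq}; your later choice $\beta\sim\kappa$ is the right one, but it is inconsistent with the exponent $2$ you state, and the diagonal $A$-part must be handled through \eqref{diagbound}, not \eqref{oddbound}. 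Second, pairing with the unbounded weight $(I+N)^{2\beta}$ is not legitimate a priori, since one does not know that $\sum_n n^{4\beta}\norm{u_{\lambda n}}^2<\infty$; the paper avoids this by using a truncated weight $t(n)$ (constant below $n_1$ and above $n_2$), taking $n_1$ large to absorb the additive constants $C$ in \eqref{oddbound}--\eqref{evenbound}, and letting $n_2\to\infty$ only at the end. With the symmetrization identity and this truncation your outline can be completed; as written, it does not prove the theorem in the stated strength.
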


\noindent{\bf Remark:}
In the original formulation of the graded sector condition (see \cite{sethuraman_varadhan_yau_00}, \cite{komorowski_landim_olla_09}, \cite{olla_01}), the bound  imposed in \eqref{evenbound} on the symmetric part of the generator was of the same form as that imposed in \eqref{oddbound} on the skew-symmetric part. We can go up bound of order $n^2$ (rather than of order $n$) in \eqref{evenbound} due to decoupling of the estimates of the self-adjoint and skew-self-adjoint parts. The proof follows the main lines of the original one with one extra observation which allows the enhancement mentioned above. We postpone the details in the Appendix.

\section{Proof of the CLT for the myopic self-avoiding walk}
\label{s:clt}

We are ready to prove the second part of Theorem
\ref{thm:diffusive_limit}. We have to prove that the martingale approximation of Theorem KV is  valid for the integrals in on the right hand side of \eqref{martingale+compensator}. We apply the \emph{graded sector condition} formulated in Theorem GSC with  $D=\gamma \abs{\Delta}$ and the operators $S$ and $A$, given in graded form in \eqref{opsgrading}. \eqref{Dellipticity} clearly holds and \eqref{Dupperbound} was already proved in Section \ref{s:diffusive_bounds}. We still need to verify conditions \eqref{oddbound}, \eqref{diagbound} and \eqref{evenbound}. 

Checking \eqref{evenbound} is straightforward: If $s(u)$ is even polynomial of degree $2q$, then using in turn \eqref{nonameopnorm} and \eqref{astaropnorm}, \eqref{aopnorm} we obtain
\begin{align}
\label{checkevenbound}
\norm{|\Delta|^{-1/2}\nabla_{-e} s(a_e+a_e^*) \nabla_e|\Delta|^{-1/2}\upharpoonright_{\cH_n} }
\le
\norm{ s(a_e+a_e^*) \upharpoonright_{\cH_n} }
\le
c n^q +C
\end{align}
with the constant $c$ proportional to the leading coefficient in the polynomial $s(u)$ and $C<\infty$. Hence, if $q=2$ (that is: $s(u)$ quartic polynomial) and the leading coefficient is sufficiently small, then \eqref{evenbound} follows. The bound \eqref{diagbound} with $\kappa=2$ also drops out from \eqref{checkevenbound}.

Finally, we check \eqref{oddbound}. By \eqref{nonameopnorm}
\begin{align}
\label{checkoddbound}
\norm{|\Delta|^{-1/2} a_{-e}^* \nabla_e|\Delta|^{-1/2} \upharpoonright_{\cH_n} }
\le 
\norm{|\Delta|^{-1/2} a_e^*\upharpoonright_{\cH_n} }.
\end{align}
We prove
\begin{align}
\label{checkoddbound}
\norm{|\Delta|^{-1/2} a_e^*\upharpoonright_{\cH_n} }
\le
C n^{1/2}
\end{align}
with some finite constant $C$.

For $\wh u\in \cH_n$,
\begin{align}
&
\abs{\Delta}^{-1/2} a^*_e \wh u(p_1,\dots,p_{n+1})
=
\\
\notag
&\hskip4cm
\frac1{\sqrt{n+1}}
\frac{1}{\sqrt{\wh D(\sum_{m=1}^{n+1} p_m)}}
\sum_{m=1}^{n+1}\left(e^{ip_m\cdot e}-1\right)
\wh u(p_1,\dots,\cancel{p_{m}},\dots,p_{n+1}).
\end{align}
Hence
\begin{align}
\label{bou}
&
\norm{\abs{\Delta}^{-1/2} a^*_e \wh u}^2
=
\\
\notag
&=
\frac1{n+1}
\int_{(-\pi,\pi]^{d(n+1)}}
\frac1{\wh D(\sum_{m=1}^{n+1} p_m)}
\times
\\
\notag
&\hskip3cm
\abs{\sum_{m=1}^{n+1} \left(e^{ip_m\cdot e}-1\right)
\wh u(p_1,\!\dots,\cancel{p_{m}},\dots,p_{n+1})}^2
\prod_{m=1}^{n+1}\frac{1}{\wh D(p_m)}
\d p_1\dots\d p_{n+1}
\\
\notag
&\le
(n+1)
\int_{(-\pi,\pi]^{d(n+1)}}
\frac1{\wh D(\sum_{m=1}^{n+1} p_m)}
\times
\\
\notag
&\hskip5cm
\abs{e^{ip_{n+1}\cdot e}-1}^2
\abs{\wh u(p_1,\dots,p_{n})}^2
\prod_{m=1}^{n+1}\frac{1}{\wh D(p_m)}
\d p_1\dots\d p_{n+1}
\\
\notag
&=
(n+1)
\int_{(-\pi,\pi]^{dn}}
\abs{\wh u(p_1,\dots,p_{n})}^2
\prod_{m=1}^{n}\frac{1}{\wh D(p_m)}
\times
\\
\notag
&\hskip5cm
\left(
\int_{(-\pi,\pi]^d}
\frac{\abs{e^{ip_{n+1}\cdot e}-1}^2} {\wh D(p_{n+1})}
\frac{1} {\wh D(\sum_{m=1}^{n+1} p_m)}
\d p_{n+1}
\right)
\d p_1\dots\d p_{n}.
\end{align}
Schwarz's inequality and symmetry was used. Note that on the right-hand side, for the innermost term, since $d\ge3$, we have
\begin{align}
\int_{(-\pi,\pi]^d}
\frac{\abs{e^{ip_{n+1}\cdot e}-1}^2} {\wh D(p_{n+1})}
\frac{1} {\wh D(\sum_{m=1}^{n+1} p_m)}
\d p_{n+1}
\le C^2.
\end{align}
Hence
\begin{equation}
\norm{\abs{\Delta}^{-1/2} a^*_e \wh u}^2 \le C^2 (n+1)
\norm{\wh u}^2
\end{equation}
and \eqref{checkoddbound} follows. This proves \eqref{oddbound}.

\section*{Appendix: Sketch of proof of Theorem GSC}

\begin{proof}
We present a sketchy proof following the main steps and notations used in \cite{olla_01} or \cite{komorowski_landim_olla_09} and emphasizing only that point where we gain slightly more in the upper bound imposed in   \eqref{evenbound}. The expert should jump directly to comparing the bounds \eqref{secondright1} and \eqref{thirdright1}

Let
\begin{equation}
f=\sum_{n=0}^N f_n,
\qquad
u_\lambda=\sum_{n=0}^\infty u_{\lambda n},
\qquad
f_n,u_{\lambda n}\in \cH_n.
\end{equation}
From \eqref{oddbound}, \eqref{evenbound} and \eqref{diagbound} it easily follows that
\begin{equation}
\label{kappaineq}
\left\| S^{-1/2}Gu_\lambda\right\|^2\leq
C \sum_n n^{2\kappa}
\left\| D^{1/2}u_{\lambda n}\right\|^2,
\end{equation}
with some $C<\infty$. So it suffices to prove that the right-hand side of \eqref{kappaineq} is bounded, uniformly in $\lambda>0$.

Let
\begin{equation}
t(n): =
n_1^\kappa \ind{0\le n< n_1}
+
n^\kappa \ind{n_1\le n \le n_2}
+
n_2^\kappa \ind{n_2< n <\infty}
\end{equation}
with the values of $0< n_1<n_2<\infty$ to be fixed later, and define the bounded linear operator $T:\cH\to\cH$,
\begin{equation}
\label{tdefin}
T\upharpoonright_{\cH_n}=t(n)I\upharpoonright_{\cH_n}.
\end{equation}

We start with the identity
\begin{equation}
\label{abcde}
\lambda( u_\lambda, T u_\lambda)
+
( T u_\lambda, STu_\lambda)
=
( T u_\lambda, Tf)
-
( T u_\lambda, [A,T]u_\lambda).
+
( T u_\lambda, [S,T]u_\lambda)
\end{equation}
obtained from the resolvent equation by straightforward manipulations. We point out here that separating the last two terms on the right-hand side rather than handling them jointly as $( T u_\lambda, [T,G]u_\lambda)$ (as done in the original proof) will allow for gain in the upper vound imposed in \eqref{evenbound}.

Just as in the original proof, we get the bounds:
\begin{align}
\label{firstleft}
\lambda
&
( T u_\lambda, T u_\lambda )
=
\lambda\sum_n t(n)^2\norm{u_{\lambda n}}^2
\geq 0,
\\[5pt]
\label{secondleft}
&
( T u_\lambda, STu_\lambda )
\geq
\sum_n t(n)^2\norm{D^{1/2}u_{\lambda n}}^2,
\\[5pt]
\label{firstright}
&
( T u_\lambda, Tf )
\leq
\frac14
\sum_n t(n)^2\norm{D^{1/2}u_{\lambda n}}^2
+
\sum_n t(n)^2\norm{D^{-1/2}f_n}^2.
\end{align}

Now the last two terms on the right hand side of \eqref{abcde} follow. The second term (containing $A$) is treated just like in the original proof, the third term (containing $S$) slightly differently. 
\begin{align}
\label{secondright1}
(Tu_\lambda, [A,T]u_\lambda)
&
=
\frac12
(u_\lambda, (AT^2-T^2A)u_\lambda)
\\
\notag
&=
\frac12 \sum_n\sum_{j=-r}^{r}
\left(t(n)^2-t(n+j)^2\right)
(u_{\lambda (n+j)}, A_{n,n+j}u_{\lambda n})
\\
\notag
&
\le
\frac12 \sum_n\sum_{j=-r}^{r}
\abs{t(n)^2-t(n+j)^2} 
\left(\frac{n}{12 r^2 \kappa }+C\right)
\norm{D^{1/2} u_{\lambda n}} \norm{D^{1/2} u_{\lambda (n+j)}}
\\[5pt]
\label{thirdright1}
(Tu_\lambda, [S,T]u_\lambda)
&
=
\frac12
(u_\lambda, (2TST-ST^2-T^2S)u_\lambda)
\\
\notag
&
=
-\frac12 \sum_n\sum_{j=-r}^{r}
\big(t(n)-t(n+j)\big)^2
(u_{\lambda (n+j)}, S_{n,n+j}u_{\lambda n})
\\
\notag
&
\le
\frac12 \sum_n\sum_{j=-r}^{r}
\big(t(n)-t(n+j)\big)^2 
\left(\frac{n^2}{6 r^3 \kappa^2}+C\right)
\norm{D^{1/2} u_{\lambda n}} \norm{D^{1/2} u_{\lambda (n+j)}}.
\end{align}
Note the difference between the coefficients in the middle lines of \eqref{secondright1}, respectively, \eqref{thirdright1}. Choosing $n_1$ sufficiently large we get
\begin{align}
&
\sup_n\max_{-r\le j\le r}
\frac{\abs{t(n)^2-t(n+j)^2}}{t(n)^2}
\left(\frac{n}{12 r^2 \kappa }+ C\right)
\le \frac1{2(2r+1)}, 
\\[5pt]
&
\sup_n\max_{-r\le j\le r}
\frac{\big(t(n)-t(n+j)\big)^2}{t(n)^2}
\left( \frac{n^2}{6 r^3 \kappa^2} + C \right)
\le 
\frac1{2(2r+1)}. 
\end{align}
and hence, via another Schwarz, 
\begin{align}
\label{secondandthirdright}
\abs{(Tu_\lambda, [A,T]Tu_\lambda)}
+
\abs{(Tu_\lambda, [S,T]Tu_\lambda)}
\le
\frac12
\sum_n t(n)^2\norm{D^{1/2}u_{\lambda n}}^2.
\end{align}
Putting \eqref{abcde}, \eqref{firstleft}, \eqref{secondleft}, \eqref{firstright}, and \eqref{secondandthirdright} together, we obtain:
\begin{align}
\sum_n t(n)^2 \norm {D^{1/2}u_{\lambda n}}^2
\leq
4 \sum_n t(n)^2 \norm{D^{-1/2}f_n}^2
=
4 \sum_{n=0}^N t(n)^2 \norm{D^{-1/2}f_n}^2.
\end{align}
Finally, letting $n_2\to\infty$, we get indeed \eqref{conditionD} via \eqref{Dupperbound} and \eqref{kappaineq}.
\end{proof}

\vskip2cm

\noindent
{\bf Acknowledgement.} BT thanks illuminating discussions with Marek Biskup and Stefano Olla, and the kind hospitality of the Mittag\,--\,Leffler Insitute, Stockholm, where part of this work was done. The work of all authors was partially supported by OTKA (Hungarian National Research Fund) grant K 60708.


\begin{thebibliography}{99}

\bibitem{amit_parisi_peliti_83}
D.\ Amit, G.\ Parisi, L.\ Peliti:
Asymptotic behavior of the `true' self-avoiding walk.
{\sl Phys.\ Rev.\ B}, {\bf 27}: 1635--1645 (1983)


\bibitem{bobkov_ledoux_00}
S.\ G.\ Bobkov, M.\ Ledoux:
From Brunn\,--\,Minkowski to Brascamp\,--\,Lieb and to logarithmic Sobolev inequalities
{\sl Geom.\ and Funct.\ Anal.} {\bf 10}: 1028-1052 (2000)

\bibitem{dobrushin_suhov_fritz_88}
R.\ L.\ Dobrushin, Yu.\ M.\ Suhov, J.\ Fritz: A.\ N.\ Kolmogorov -- the founder
of the theory of reversible Markov processes. {\sl Uspekhi Mat.\ Nauk} {\bf
43:6}: 167--188 (1988) [English translation: {\sl Russian Math.\ Surveys} {\bf
43:6}: 157--182]

\bibitem{funaki_05}
T. Funaki:
{\sl Stochastic Interface Models.}
In:
{\sl Lectures on Probability Theory and Statistics},
{\sl Lecture Notes in Mathematics} {\bf 1869},
Springer, Brelin-Heidelberg, 2005

\bibitem{horvath_toth_veto_10}
I. Horv\'ath, B. T\'oth, B. Vet\H o: Diffusive limit for self-repelling
Brownian polymers in $d\ge3$. {\tt http://arxiv.org/abs/0912.5174}, {\sl
submitted} (2009)

\bibitem{janson_97}
S.\ Janson:
{\sl Gaussian Hilbert Spaces.}
Cambridge University Press, 1997

\bibitem{kipnis_varadhan_86}
C.\ Kipnis, S.\ R.\ S.\ Varadhan:
Central limit theorem for additive functionals of reversible Markov processes with applications to simple exclusion.
{\sl Commun.\ Math.\ Phys.} {\bf 106}: 1--19 (1986)

\bibitem{komorowski_landim_olla_09}
T.\ Komorowski, C.\ Landim, S.\ Olla: {\sl Book in preparation}, Springer.

\bibitem{komorowski_olla_03}
T.\ Komorowski, S.\ Olla:
On the sector condition and homogenization of diffusions with a Gaussian drift.
{\sl Journal of Functional Analysis} {\bf 197}: 179--211 (2003)

\bibitem{landim_yau_97}
C. Landim, H-T. Yau: 
Fluctuation-dissipation equation of asymmetric simple exclusion processes. 
{\sl Probab. Theory Related Fields}  {\bf 108}: 321--356 (1997).


\bibitem{obukhov_peliti_83}
S.\ P.\ Obukhov, L.\ Peliti:
Renormalisation of the ``true'' self-avoiding walk.
{\sl J.\ Phys.\ A}, {\bf 16}: L147--L151 (1983)

\bibitem{olla_01}
S.\ Olla:
Central limit theorems for tagged particles and for Diffusions in random environment.
In: F.\ Comets, \'E.\ Pardoux (eds): {\sl Milieux
al\'eatoires} {\sl Panor.\ Synth\`eses} {\bf 12}, Soc.\ Math.\ France, Paris,
2001.

\bibitem{peliti_pietronero_87}
L.\ Peliti, L.\ Pietronero:
Random walks with memory.
{\sl Riv.\ Nuovo Cimento}, {\bf 10}: 1--33 (1987)

\bibitem{reed_simon_vol1_80}
M.\ Reed, B.\ Simon:
{\sl Methods of Modern Mathematical Physics Vol 1, 2.}
Academic Press New York, 1972--75.

\bibitem{sethuraman_varadhan_yau_00}
S.\ Sethuraman, S.\ R.\ S.\ Varadhan, H--T.\ Yau: Diffusive limit of a tagged
particle in asymmetric simple exclusion processes. {\sl Comm.\ Pure Appl.\
Math.} {\bf 53}: 972--1006 (2000)

\bibitem{simon_74}
B.\ Simon:
{\sl The $P(\phi)_2$ Euclidean (Quantum) Field Theory.}
Princeton University Press, 1974.

\bibitem{tarres_toth_valko_09}
P.\ Tarr\`es, B.\ T\'oth, B.\ Valk\'o:
Diffusivity bounds for 1d Brownian polymers.
{\tt http://arxiv.org/abs/0911.2356}, {\sl submitted} (2009)

\bibitem{toth_86}
B.\ T\'oth:
Persistent random walk in random environment.
{\sl Probab.\ Theory Rel.\ Fields} {\bf 71}: 615--625 (1986)

\bibitem{toth_95}
B.\ T\'oth:
The 'true' self-avoiding walk with bond repulsion on ${\mathbb Z}$: limit theorems.
{\sl Ann.\ Probab.} {\bf 23}: 1523--1556 (1995)

\bibitem{toth_veto_09}
B.\ T\'oth, B.\ Vet{\H o}:
Continuous time `true' self-avoiding random walk on $\Z$.
{\tt http://arxiv.org/abs/0909.3863}, {\sl submitted} (2009)

\bibitem{toth_werner_98}
B.\ T\'oth, W.\ Werner:
The true self-repelling motion.
{\sl Probab.\ Theory Related Fields} {\bf 111}: 375--452 (1998)

\bibitem{valko_09}
B.\ Valk\'o:
{\sl personal communication}

\bibitem{varadhan_96}
S.\ R.\ S.\ Varadhan:
Self-diffusion of a tagged particle in equilibrium of asymmetric mean zero random walks with simple exclusion.
{\sl Ann.\ Inst.\ H.\ Poincar\'e -- Probab.\ et Stat.} {\bf 31}: 273--285 (1996)

\bibitem{yaglom_47}
A.\ M.\ Yaglom:
On the statistical treatment of Brownian motion. (Russian)
{\sl Doklady Akad. Nauk SSSR} {\bf 56}: 691--694 (1947)

\bibitem{yaglom_49}
A.\ M.\ Yaglom:
On the statistical reversibility of Brownian motion. (Russian)
{\sl Mat. Sbornik} {\bf 24}: 457--492 (1949)

\end{thebibliography}
\end{document}